\numberwithin{equation}{section}
\newcommand{\m}{\overline{m}}
\newcommand{\s}{\mathrm{sgn}}
\newcommand{\id}{\mathrm{id}}
\newcommand{\defi}{\overset{\mathrm{def}}=}
\theoremstyle{plain}
\newtheorem{thm}{\protect Theorem}[section]
\newtheorem{lem}[thm]{\protect Lemma}
\newtheorem{prop}[thm]{\protect Proposition}
\newtheorem{conj}[thm]{\protect Conjecture}
\newtheorem{cor}[thm]{\protect Corollary}
\newtheorem{obs}[thm]{\protect Observation}
\theoremstyle{definition}
\theoremstyle{remark}
\newtheorem{rem}[thm]{\protect Remark}
\newtheorem*{claimno}{\protect Claim}
\begin{document}

%% Headers:
\lhead{Standard Identities on $M_{n}E^{2}$ and $M_{n}E^{3}$}
\rhead{B. A. Bal\'azs, Sz. M\'esz\'aros}
\chead{}

\title{%Another Application of Graph Theory to Algebra: \\
The Minimal Degree Standard Identity \\on $M_{n}E^{2}$ and $M_{n}E^{3}$}
\author{Barbara Anna Bal\'azs, Szabolcs M\'esz\'aros}

\begin{abstract}
We prove an Amitsur--Levitzki-type theorem for Grassmann algebras, stating that the minimal degree of a standard identity that is a polynomial identity of the ring of  $n \times n$ matrices over the $m$-generated Grassmann algebra is at least $2\left\lfloor\frac{m}{2}\right\rfloor+4n-4$ for all $n,m\geq 2$ and this bound is sharp for $m=2,3$ and any $n\geq 2$. The arguments are purely combinatorial, based on computing sums of signs corresponding to Eulerian trails in directed graphs.
\end{abstract}

\subjclass[2010]{05C45, 16R20 (primary), 05C25, 15A75 (secondary).}

\keywords{PI-algebras, standard identity, Eulerian trails, Amitsur--Levitzki}

\thanks{This research was partially supported by National Research, Development and Innovation Office, NKFIH K 119934 and it is connected to the scientific program of the "Development of 
	quality-oriented and harmonized R+D+I strategy and functional model at 
	BME" project, supported by the New Hungary Development Plan (Project ID: 
	TÁMOP-4.2.1/B-09/1/KMR-2010-0002).}

%PLACED IN THE END OF THE ARTICLE
\address{Department of Computer Science and Information Theory, Budapest University of Technology and Economics, 1111 Budapest, Műegyetem rkp. 3, Hungary}
\email{bbarbara@cs.bme.hu}

\address{MTA Rényi Institute, 1053 Budapest, Reáltanoda utca 13-15, Hungary}
\email{meszaros.szabolcs@renyi.mta.hu}

\maketitle

\section{Introduction}\label{sec:intro}

In \cite{S1,S2} R. G. Swan gave a graph-theoretic proof of the Amitsur--Levitzki theorem
which states that the standard identity of degree $2n$ holds for the ring of $n\times n$ matrices over a commutative ring. We generalize these methods and extend the Amitsur--Levitzki theorem to the case where the commutative ring is replaced by a finite dimensional Grassmann algebra.

Let $R$ be a commutative unital ring, $m,n\geq 1$ and denote by $M_{n}E^{m}$ the ring of $n\times n$-matrices over the $m$-generated Grassmann algebra 
$$E^{m}\overset{\mathrm{def}}{=}R\langle v_{i}\ |\ 1\le i\leq m\rangle/(v_{i}^{2},v_{i}v_{j}+v_{j}v_{i}\, \mid 1 \leq i,j \leq m).$$
We say that the standard identity of degree $k$ is a polynomial identity of $M_{n}E^{m}$ if and only if for any $x_{1},\dots,x_{k}\in M_{n}E^{m}$

\[
s_{k}(x_{1},\dots,x_{k})\overset{\mathrm{def}}{=}
\sum_{\pi\in\mathfrak{S}_{k}}\mathrm{sgn}(\pi)
x_{\pi(1)}x_{\pi(2)}\dots x_{\pi(k)}=0\]
where $\mathfrak{S}_{k}$ denotes the symmetric group on the set $\{1,\dots,k\}$. 

The problem of finding the smallest $k$ such that the standard identity of degree $k$ holds for $M_nE^m$ was raised in \cite{MMSzW}. They proved an upper bound for this quantity applying a theorem of M. Domokos (see \cite{D}), where a connection between signed sums of permutations corresponding to Eulerian trails and standard identities of matrix algebras is spelled out. This upper bound was improved in \cite[Prop. 8]{F}, where it was also asked whether the standard identity of degree $2\left(n+\left\lfloor \frac m2 \right\rfloor\right)$ holds in $M_nE^m$. For $m=0,1$ the exact value of the smallest $k$ is $2n$ by the Amitsur--Levitzki theorem (see \cite{AL}). In this paper we prove the following:

\begin{thm}\label{thm:lower}
The standard identity of degree $2\left\lfloor\frac{m}{2}\right\rfloor+4n-5$ is not a polynomial identity of $M_nE^m$ for any $n,m\geq 2$. 
\end{thm}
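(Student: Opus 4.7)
The plan is to exhibit explicit matrices $x_1,\ldots,x_k\in M_n E^m$ with $k = 2\lfloor m/2\rfloor + 4n - 5$ for which $s_k(x_1,\ldots,x_k)\neq 0$, following the graph-theoretic framework of Swan and its Grassmann extension due to Domokos that is cited in the introduction. The standard move is to take each $x_i$ of the form $E_{a_i,b_i}\cdot w_i$ where $E_{a_i,b_i}$ is a matrix unit and $w_i\in E^m$ is a Grassmann monomial. For any $\pi\in\mathfrak{S}_k$, the product $x_{\pi(1)}\cdots x_{\pi(k)}$ then vanishes unless the sequence of edges $(a_{\pi(j)},b_{\pi(j)})$ forms an Eulerian trail in the directed multigraph $G$ with edge multiset $\{(a_i,b_i)\}_{i=1}^k$, in which case it equals $E_{a_{\pi(1)},b_{\pi(k)}}$ times the appropriately-ordered Grassmann product. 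In this way, non-vanishing of $s_k$ reduces to a signed count of Eulerian trails in $G$, weighted by $\mathrm{sgn}(\pi)$ and by the Grassmann reordering sign.

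For the construction, I would begin with Swan's lower-bound graph for $M_n$, consisting of the $2n-1$ edges $\{E_{ii}\}_{i=1}^{n}\cup\{E_{i,i+1}\}_{i=1}^{n-1}$, whose Swan signed sum is known to yield $E_{1n}$. To this backbone I would first attach $m' := 2\lfloor m/2\rfloor$ central edges $I\cdot v_j$: each commutes with every backbone matrix unit (as $v_j$ is a scalar in $M_nE^m$), but the $v_j$'s pairwise anticommute, so the permutation sign is exactly balanced by the Grassmann reordering sign and the augmented sum becomes a nonzero integer multiple of $E_{1n}\cdot v_1\cdots v_{m'}$; this already settles the case $n=2$, where $k = m'+3$. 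For $n\geq 3$ the remaining $2n-4$ matrices cannot come from $M_n$ alone, since Amitsur--Levitzki forces $s_{2n}\equiv 0$ on $M_n$, and must genuinely use the Grassmann structure. A natural candidate is to adjoin matrices whose Grassmann parts are even monomials such as $v_a v_b$ (central in $E^m$), placed as additional loops at interior vertices of the path to "thicken" the backbone without destroying the relevant Eulerian structure.

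Once the labelled graph is fixed, the heart of the argument is to compute, or at least lower-bound, the corresponding signed sum over Eulerian trails. I would partition the trails according to the relative positions of odd versus even Grassmann-labelled edges, show that within each class the combined permutation-and-Grassmann sign is uniform, and either identify a dominant class or construct a sign-reversing involution on the cancelling ones. The principal obstacle is precisely this last step: the permutation sign and the Grassmann reordering sign interact in a way that can easily produce massive cancellation, and the placement and labelling of the extra edges must be chosen with care to prevent this. This sign-bookkeeping over Eulerian trails -- exactly the combinatorial ingredient the abstract highlights -- is where I expect the bulk of the proof to lie.
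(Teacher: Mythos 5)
Your framework is the right one (it is exactly the reformulation of Prop.~\ref{prop:equivalence}), and your $n=2$ construction does work: with $m'$ central factors $Iv_j$ added to the $(2n-1)$-edge Swan graph, the permutation sign and the Grassmann reordering sign agree on each block of $m'!$ assignments, and the residual alternating sum over position sets is a nonzero integer. But this very observation kills your construction for $n\geq 3$. Because each $Iv_j$ commutes with every matrix unit and the sign contributions of the $v_j$'s among themselves cancel against the permutation sign, the whole sum \emph{factors}:
\[
s_k(y_1,\dots,y_{k-m'},Iv_1,\dots,Iv_{m'})
= m'!\,\Big(\sum_{P}\epsilon(P)\Big)\, s_{k-m'}(y_1,\dots,y_{k-m'})\cdot v_1\cdots v_{m'},
\]
where the $y_i$ are the matrix units and $\epsilon(P)=\pm1$ depends only on the positions occupied by the central factors. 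Since $k-m'=4n-5\geq 2n$ for all $n\geq 3$, the factor $s_{k-m'}(y_1,\dots,y_{k-m'})$ vanishes by Amitsur--Levitzki no matter which matrix units you choose. Your proposed repair --- adjoining $2n-4$ further matrices carrying even monomials $v_av_b$ --- cannot work either: every permutation in $s_k$ uses all $k$ matrices, so a nonzero term requires the Grassmann supports of all $x_i$ to be pairwise disjoint, i.e.\ the total degree is at most $m$; having already spent $m'=2\lfloor m/2\rfloor$ generators on the central factors, at most one generator remains, while you would need $2(2n-4)\geq 4$ fresh ones.

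The paper avoids this trap by making the odd generators \emph{non-central}: in its graph $G_n$ the set $B$ consists of $m$ edges $v_\ell E_{12}$, $v_\ell E_{21}$ forming $\overline m$ two-cycles between vertices $1$ and $2$, grafted onto a chain of four-edge ``diamonds'' and two loops. Then $\mathrm{sgn}(\sigma)\mathrm{sgn}(\sigma_{M(B)})$ no longer factors through the positions of the Grassmann-carrying edges, and the content of Theorem~\ref{thm:Sn} is the exact evaluation $S(G_n,B)=\frac23\overline m(\overline m+n-1)!\,\overline m!$ (and $((\overline m+1)!)^2$ for $n=2$) via the recursion of Lemma~\ref{lem:recursive} and the base cases in Lemmas~\ref{lem:S2}--\ref{lem:S3}. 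This sign computation, which you correctly identify as the heart of the matter, is entirely absent from your proposal, and the construction you do specify provably yields zero for every $n\geq 3$.
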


\begin{thm}\label{thm:upper}
The standard identity of degree $4n-2$ is a polynomial identity of $M_nE^2$ and $M_nE^3$ for all $n\geq 2$.
\end{thm}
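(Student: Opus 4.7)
The plan is to reduce $s_{4n-2}$ on $M_nE^m$ (for $m\in\{2,3\}$) to a small number of combinatorial identities via multilinearity, and then adapt the Swan--Domokos signed-Eulerian-trail argument to each. By multilinearity it suffices to evaluate $s_{4n-2}$ on tuples $x_i = g_i A_i$ with $g_i$ a Grassmann basis monomial and $A_i\in M_n(R)$. If some $v_\ell$ divides two distinct $g_i$'s, then each summand of $s_{4n-2}(x_1,\ldots,x_{4n-2})$ contains $v_\ell^2 = 0$; so we may assume the supports of the $g_i$'s are pairwise disjoint. As $m\leq 3$, only a handful of configurations remain.

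\textbf{Easy cases.} Even-degree Grassmann monomials ($1$ and $v_\ell v_{\ell'}$) are central in $E^m$, and a single odd-degree factor ($v_\ell$ or $v_1v_2v_3$) commutes past the other $A_j\in M_n(R)$ because $R$-valued matrices commute with Grassmann scalars. Hence, in any configuration with at most one odd $g_i$ --- including the configuration with one $v_a$ and one $v_bv_c$ --- the Grassmann part factors out and the identity reduces to $s_{4n-2}(A_1,\ldots,A_{4n-2}) = 0$ on $M_n(R)$, which holds by Amitsur--Levitzki since $4n-2\geq 2n$. The substantive cases left are
\begin{itemize}
\item[(E1)] two odd singletons $v_p\neq v_q$ appearing at positions $a,b$;
\item[(F)] only for $m=3$, all three odd singletons $v_1,v_2,v_3$ appearing at positions $a,b,c$.
\end{itemize}

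\textbf{Modified standard identities.} Commuting the odd factors outside the product introduces a Grassmann sign $\eta(\pi)$ depending on the order in which the marked positions appear in $\pi$: in (E1), $\eta_{a,b}(\pi) = +1$ if $a$ precedes $b$ in $(\pi(1),\ldots,\pi(k))$ and $-1$ otherwise, while in (F), $\eta(\pi)$ is the sign of the $\mathfrak{S}_3$-permutation determined by the positions of $a,b,c$ in $\pi$. Factoring out the central Grassmann scalar reduces the theorem to
\[
\sum_{\pi\in\mathfrak{S}_k}\mathrm{sgn}(\pi)\,\eta(\pi)\,A_{\pi(1)}\cdots A_{\pi(k)} \;=\; 0 \qquad (k=4n-2,\ A_i\in M_n(R)).
\]
Via Domokos's theorem, specializing $A_i=E_{p_i,q_i}$, this is equivalent to the vanishing of the corresponding weighted sum $\sum_{\pi}\mathrm{sgn}(\pi)\eta(\pi)$ over all Eulerian trails of every directed multigraph $G$ on at most $n$ vertices with $4n-2$ labeled edges, two or three of which are distinguished.

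\textbf{Main step.} The crux is proving that these modified signed Eulerian-trail sums vanish. I would construct a sign-reversing involution on the Eulerian trails of $G$ that preserves the order class of the marked edges (and hence $\eta$) while flipping $\mathrm{sgn}(\pi)$. Swan's classical argument provides such an involution when $G$ has at least $2n$ edges on at most $n$ vertices; here the extra $2n-2$ edges should, by pigeonhole on the $n$ vertices, guarantee a swappable pair of \emph{non-distinguished} edges at a common vertex. Case (F) can alternatively be reduced to (E1) by fixing one of the three marked edges and symmetrizing over the remaining pair. The main obstacle I anticipate is producing this involution uniformly across all admissible multigraphs while simultaneously controlling the permutation sign and the marked-edge order; the strengthening of the degree from $2n$ to $4n-2$ is exactly the combinatorial slack required for this.
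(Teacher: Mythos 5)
Your reduction to the graph-theoretic statement is sound and matches the paper's Section \ref{sec:reformulation} and Appendix: multilinearity, disjoint Grassmann supports, centrality of even monomials, the extraction of the sign $\eta(\pi)=\mathrm{sgn}(\sigma_M)$, and the identification of the substantive cases with $|B|=2,3$ marked edges in a digraph on $n$ vertices with $4n-2$ edges all appear there. The gap is the entire main step. You propose a single sign-reversing involution obtained ``by pigeonhole'' from a swappable pair of non-distinguished edges at a common vertex, but this cannot work: swapping two edges that merely share one endpoint does not map Eulerian trails to Eulerian trails. One needs two \emph{parallel} subtrails (same source and same target), and the sign of such a swap depends on the parities of the subtrail lengths and of the numbers of marked edges they and the intervening segment contain (Observation \ref{obs}), so even when a parallel pair exists the swap need not be sign-reversing. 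Moreover, no counting argument can supply the needed configuration: the paper's lower-bound graph $G_n$ for $m=2$ has $4n-3$ edges on $n$ vertices, no two parallel unmarked edges, and \emph{nonzero} signed sum, so the ``slack'' between $4n-2$ and the failure threshold is exactly one edge and the bound must be used with full precision.

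What the paper actually does is very far from a single involution. It proves the statement for $4n-1$ edges (then passes to $4n-2$ via $s_k(x_1,\dots,x_k)=s_{k+1}(1,x_1,\dots,x_k)$) by a nested induction: (i) a degree-homogenization lemma deleting a vertex of corrected degree at most $3$ (or degree $4$ with surplus loops) by contracting pairs of parallel length-three subtrails into single edges and invoking the hypothesis on $n-1$ vertices; (ii) a Swan-type lemma expressing $S(G,B)$ as a signed \emph{sum} of values $S(G_{a,c},B_{a,c})$, $S(G^{a,d},B^{a,d})$ on modified graphs in which an edge is converted to a loop --- a recursion, not an involution; (iii) a separate analysis of cut vertices; and (iv), in the homogeneous case where every vertex has corrected degree $4$ and one loop, an induction on a ``lazy distance'' between the marked edges, concluding via a graph automorphism that is simultaneously sign-preserving and sign-reversing. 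None of these ingredients is present in your outline, and your proposed reduction of the three-marked-edge case to the two-marked-edge case by ``symmetrizing'' is not substantiated (the paper treats $|B|\le 3$ uniformly rather than by such a reduction). As written, the proposal establishes only the reformulation, not the theorem.
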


In particular, the case of $m=2,3$ is settled.

The construction we use in the proof of the lower bound is new. The argument showing the upper bound is a substantial enhancement of the methods used in \cite{S1}. Based on the results we conjecture that

\begin{conj}
	For all $n,m\geq 2$ the minimal degree of a standard identity that is a polynomial identity of $M_{n}E^{m}$ is $2\left\lfloor\frac{m}{2}\right\rfloor+4n-4$.
\end{conj}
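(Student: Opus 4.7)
The lower bound direction of the conjecture is already handled by Theorem~\ref{thm:lower}: that result shows the standard identity of degree $2\lfloor m/2\rfloor + 4n - 5$ fails on $M_nE^m$, so the minimal degree is at least $2\lfloor m/2\rfloor + 4n - 4$. Setting $d(n,m) \defi 2\lfloor m/2\rfloor + 4n - 4$, the substantive content of the conjecture is the matching upper bound, namely that $s_{d(n,m)}$ vanishes identically on $M_nE^m$. For $m \in \{2,3\}$ one has $\lfloor m/2\rfloor = 1$, so $d(n,m) = 4n-2$, which is exactly Theorem~\ref{thm:upper}; hence the genuinely open case is $m\geq 4$.

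The plan is to extend the Swan-style combinatorial framework underlying Theorem~\ref{thm:upper} to arbitrary $m$. Via Domokos's theorem \cite{D}, showing that $s_{d(n,m)}$ vanishes on $M_nE^m$ reduces to a family of signed-sum identities: for every multi-digraph $G$ on the vertex set $\{1,\dots,n\}$ with $d(n,m)$ edges, and every decoration of the edges by monomials in the anti-commuting generators $v_i$ (no two edges sharing a generator, since $v_i^2 = 0$), the sum over Eulerian trails of $G$ of the trail's permutation sign times the sign arising from reordering the attached $v_i$'s along the trail must vanish. Proving the conjecture thus amounts to constructing, for each such decorated graph, a sign-reversing involution on the set of Eulerian trails that is compatible with the Grassmann labelling.

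First I would revisit the proof of Theorem~\ref{thm:upper} to extract the combinatorial moves (edge-swaps, pivoting at a repeated vertex, rotations of closed subtrails) that generate the involution in the $m \in \{2,3\}$ case. The new ingredient for larger $m$ is that moving past a $v_i$-labeled edge produces an extra sign from anti-commutation, so the involution must either preserve the relative order of the Grassmann edges or reverse it by a controlled amount. The most natural attack is induction on $m$, pairing the generators $v_{m-1}, v_m$ and using $d(n,m) = d(n,m-2) + 2$ to re-express the sum for $m$ generators as a sum of previously known identities for $m-2$ generators running along the sub-trails between the distinguished pair of edges.

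The main obstacle I anticipate is precisely the compatibility of this pairing with the involution: Swan's move on the un-labeled part of the trail may displace the $v_{m-1}$- and $v_m$-labeled edges relative to the other Grassmann edges, introducing an anti-commutation sign that is not easy to control globally. Overcoming this likely requires restricting attention to a canonical subfamily of Eulerian trails---for instance, those in which the Grassmann-labeled edges appear in a prescribed relative order at their first occurrence---and showing that the full sum differs from the restricted sum only by manifestly cancelling terms. Designing such a canonical form, and verifying that it is preserved (up to a predictable sign) by the combinatorial involution inherited from the $m=2,3$ case, is in my view the core technical difficulty, and is presumably the reason the authors formulate the general statement only as a conjecture.
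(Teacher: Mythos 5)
The statement you are addressing is stated in the paper only as a conjecture: the authors prove the lower bound (Theorem~\ref{thm:lower}) and the matching upper bound only for $m=2,3$ (Theorem~\ref{thm:upper}), and they explicitly leave the case $m\geq 4$ open. Your proposal correctly assembles the two proved ingredients --- the lower bound $2\lfloor m/2\rfloor+4n-4$ from Theorem~\ref{thm:lower} and the upper bound for $m\in\{2,3\}$ from Theorem~\ref{thm:upper} --- but for $m\geq 4$ it offers a research plan rather than a proof. The passage beginning ``The plan is to extend\dots'' and ending with ``is in my view the core technical difficulty'' is an honest description of an unsolved problem, not an argument; no sign-reversing involution is constructed, the proposed induction on $m$ via pairing $v_{m-1},v_m$ is never carried out, and you yourself identify the step (controlling the anti-commutation sign when Swan-type moves displace the Grassmann-labeled edges) that you cannot complete. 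So the conjecture remains unproved by your text.

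It is worth noting why the obstruction is more structural than your sketch suggests. The upper-bound machinery of Section~\ref{sec:upperbound} hinges on Lemma~\ref{lem:cdeg}: with $|B|\leq 3$ and $4n-1$ edges, the handshake count $\sum_v \mathrm{cdeg}(v)=4n$ forces every corrected degree to equal $4$ after the degree-homogenization step, which is what makes condition $(D)$, Lemma~\ref{lem:conn}, and the lazy-distance induction in Prop.~\ref{prop:case-D} available. For $m\geq 4$ the relevant edge count is $2\lfloor m/2\rfloor+4n-4$, so the degrees are no longer pinned at $4$, and --- as the authors' own closing remark states --- the connectivity argument of Lemma~\ref{lem:conn} does not survive. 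Any genuine proof of the conjecture for $m\geq 4$ would need to replace, not merely adapt, this degree-homogenization framework; restricting to a canonical subfamily of Eulerian trails, as you propose, does not by itself address the loss of the degree bound.
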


The article is organized as follows: In Section \ref{sec:reformulation} we reformulate the problem in terms of directed Eulerian trails.  In Section \ref{sec:lowerbound} we prove Theorem \ref{thm:lower}, while in Section \ref{sec:upperbound} we show Theorem \ref{thm:upper}. In Appendix \ref{appendix}, we complete the proof of \mbox{Prop. \ref{prop:equivalence}}.

%GRAPH-THEORETIC REFORMULATION
\section{Graph-theoretic reformulation}\label{sec:reformulation}

Following the ideas of \cite{S1,S2} we formulate the graph-theoretic
counterpart of the problem of standard identities on $M_{n}E^{m}$. 

Let $G=(V,A,s,t)$ be a doubly-rooted directed graph (possibly with loops and multiple edges), that is, each edge $a\in A$ has a unique source $a^-$ and target $a^+$ in $V$, moreover $(s,t)\in V\times V$ is a fixed pair of vertices called roots.

For any $B\subseteq A$ we will define $T(G,B)\in \mathbb{N}$ such that the following holds:

\begin{prop}\label{prop:equivalence}
For any $n,m,k\in \mathbb{N}^+$, $M_{n}E^{m}$ 
satisfies the standard identity of degree $k$ if 
and only if $T(G,B)=0$ for any doubly-rooted digraph 
$G=(V,A,s,t)$ with $|V|=n$, $|A|=k$ and
$B\subseteq A$ of size at most $m$.
\end{prop}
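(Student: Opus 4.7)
The plan is to unwind $s_k$ on generic Grassmann-weighted matrix units, in the spirit of Swan's graph-theoretic proof of the Amitsur--Levitzki theorem. First I would pin down the combinatorial invariant: fixing an enumeration $B=\{b_1,\dots,b_r\}$ and calling $\pi\in\mathfrak{S}_k$ \emph{Eulerian} if $a_{\pi(1)},\dots,a_{\pi(k)}$ traces an Eulerian trail from $s$ to $t$ in $G$, I would set
\[
T(G,B) \defi \Bigl|\sum_{\pi \text{ Eulerian}} \s(\pi)\,\s(\pi|_B)\Bigr|,
\]
where $\pi|_B\in\mathfrak{S}_r$ records the order in which the elements of $B$ appear along the trail. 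The absolute value makes $T(G,B)$ independent of the enumeration of $B$, since re-enumerating multiplies every $\s(\pi|_B)$ by a common sign.

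For the forward implication, given admissible $(G,B)$, I would assign to each $a_i \in A$ the element $x_i \defi e_{a_i^-, a_i^+} \cdot w_i$, with $w_i = v_j$ if $a_i$ is the $j$-th element of $B$ and $w_i = 1$ otherwise. The scalars $w_i$ commute past matrix units, so the $(s,t)$-entry of $x_{\pi(1)}\cdots x_{\pi(k)}$ vanishes unless $\pi$ is Eulerian, in which case it equals $\s(\pi|_B)\,v_1\cdots v_r$. Summing, the $(s,t)$-entry of $s_k(x_1,\dots,x_k) = 0$ reads $\pm T(G,B)\cdot v_1\cdots v_r$, and since $v_1\cdots v_r\ne 0$ in $E^m$ we conclude $T(G,B) = 0$.

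For the converse, multilinearity of $s_k$ allows testing only on inputs $x_i = e_{s_i,t_i}\,v_{I_i}$ with $I_i \subseteq \{1,\dots,m\}$. If two $I_i$ share a generator then every term contains $v_\ell^2 = 0$ and $s_k$ vanishes trivially, so assume the $I_i$ are pairwise disjoint. The key reduction is to $|I_i|\le 1$ via the $\mathbb{Z}/2$-grading of $E^m$: decompose $v_{I_i} = \tilde v_i \cdot c_i$ with $\tilde v_i \in \{1, v_{j_i}\}$ of the same parity as $|I_i|$ and $c_i$ an even (hence central) monomial. Since the $c_i$ are central in $M_n E^m$ and even elements pairwise commute,
\[
s_k(x_1,\dots,x_k) = \Bigl(\prod_i c_i\Bigr)\cdot s_k(\tilde x_1,\dots,\tilde x_k),\qquad \tilde x_i = e_{s_i,t_i}\,\tilde v_i,
\]
and applying the forward calculation to the right-hand side identifies each $(s,t)$-entry with $\pm T(G,B)\cdot v_{j_{b_1}}\cdots v_{j_{b_r}}$ for $B = \{i : \tilde v_i \ne 1\}$. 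Since the $\tilde v_i$ are distinct generators we have $|B|\le m$, the hypothesis $T(G,B)=0$ applies, and $s_k(x_1,\dots,x_k)$ vanishes at every matrix entry.

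The main hurdle I foresee is the sign bookkeeping: verifying that the Grassmann sign produced by reordering $v_{J_{\pi(1)}}\cdots v_{J_{\pi(k)}}$ into a chosen reference ordering really agrees with $\s(\pi|_B)$ up to a $\pi$-independent global sign. Once this matching is handled once and for all, the two implications are mirror computations. The factorization of $v_{I_i}$ through its even, central part is the step that goes beyond Swan's purely commutative setting, and the bound $|B|\le m$ emerges naturally from the disjointness of the $I_i\subseteq\{1,\dots,m\}$.
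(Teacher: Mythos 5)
Your proposal is correct and follows essentially the same route as the paper: multilinearity reduces to monomial matrix units, disjointness of the Grassmann supports and factoring out the even (central) part of each coefficient reduces to coefficients of degree at most one, and the $(s,t)$-entry of $s_k$ is then identified with $\pm T(G,B)\,v_1\cdots v_r$ via the sign $\s(\pi|_B)$, which is exactly the paper's $\mathrm{sgn}(\sigma_M)$. The sign bookkeeping you flag is handled in the paper by the explicit definition of $\sigma_M$ and is routine; the only small point you leave implicit is that $|S(G,B)|$ is also independent of the enumeration of all of $A$ (not just of $B$), which the paper checks separately.
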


In short, the idea behind the proposition is the following: consider a $k$-element subset $\{x_1, \dots, x_k\}\subseteq M_n E^m$ where $x_j$ is of the form $v_i E_ {\alpha\beta}$ or $E_ {\alpha\beta}$ (the latter denoting a matrix unit). Then we may associate to it a pair $(G,B)$ such that -- for each fixed pair of indices $s,t\in [n]\overset{\mathrm{def}}{=}\{1,2,\dots,n\}$ -- the matrix entry $(s_k(x_1,\dots,x_k))_{s,t}$ is expressed as $\pm T(G,B)$ times a monomial in $v_1,\dots, v_k$.
The proof of Prop. \ref{prop:equivalence} is given in the Appendix.

First we define a map of permutations $\sigma \mapsto \sigma_M$  
encapsulating the signs in the summands of $s_k$ given by the anti-commutativity of the generators of $E^m$.

For any $k,m \in \mathbb{N}^{+}$ let $M\subseteq [k]$ of size $m$, considered as a totally ordered set with the inherited ordering. For a permutation $\sigma\in\mathfrak{S}_{k}$  define
\begin{equation}\label{sigma_M}
\sigma_{M} \overset{\mathrm{def}}{=} p_{\sigma} \circ \sigma|_{M} \circ j \in \mathfrak{S}_{m}
\end{equation}
where $j:[m]\to M$ is the unique order-preserving bijection and $p_{\sigma}:[k]\to [m]$ is any monotonically increasing function that makes the composition $\sigma_M:[m]\to [m]$ bijective.
Explicitly, if 
$M=\{j_1,\dots,j_{m}\}$
then 
\[
\sigma_M(g)=|\{h \,\mid\, 1\leq h\leq m ,\ 
\sigma(j_{h})\leq \sigma(j_{g})\}|
\]
for all $1\leq g\leq m$. For example, if $m=2$ then $\sigma_M=\mathrm{id}\in \mathfrak{S}_2$ if and only if $\sigma(j_1) < \sigma(j_2)$.
Note that $\sigma \mapsto \sigma_M$ is not a group homomorphism, which may partially explain the complexity of the problem in Theorem \ref{thm:lower} and \ref{thm:upper}.

For any $P\subseteq \mathfrak{S}_k$ and $M\subseteq [k]$ let us define
\begin{equation}\label{eq:s(P,M)}
s(P,M)\overset{\mathrm{def}}{=}
\sum_{\sigma\in P}\mathrm{sgn}(\sigma)\,\mathrm{sgn}(\sigma_M)
\end{equation}
which will help us to express the entries of the matrix $s_k(x_1,\dots,x_k)\in M_{n}E^{m}$.

Let $G=(V,A,s,t)$ be a doubly-rooted digraph. Given an enumeration $A=\{a_{1},\dots,a_{k}\}$ of the edges of $G$ and a subset $B\subseteq A$ define
\begin{equation}\label{eq:M(B)}
M(B)\overset{\mathrm{def}}{=}\{j\in [k]\;|\;a_j\in B\}\qquad\textrm{and}
\end{equation}
\begin{equation}\label{eq:P(G)}
P(G)\overset{\mathrm{def}}{=}
\Big\{\sigma \in \mathfrak{S}_k\;\Big|\; 
a_{\sigma(1)}^-=s,\;a_{\sigma(k)}^+=t,\;
a_{\sigma(j+1)}^- = a_{\sigma(j)}^+\;
(\forall j\in [k-1])\Big\}.
\end{equation}
In short, the set of $\sigma$'s such that $(a_{\sigma(1)},\dots,a_{\sigma(k)})$ is an Eulerian trail from $s$ to $t$.

Given an enumeration of $A$ we define
\[
S(G,B)\overset{\mathrm{def}}{=}
s(P(G),M(B))=
\sum_{\sigma\in P(G)}\mathrm{sgn}(\sigma)\,\mathrm{sgn}(\sigma_{M(B)}).
\]

Note that the absolute value of $S(G,B)$ is independent of the enumeration of the edges.
Indeed, beside the original enumeration, consider the enumeration $a_{\pi(1)},\dots,a_{\pi(k)}$ for some $\pi\in \mathfrak{S}_k$. Suppressing $G$ and $B$ we denote the corresponding sets and maps as 
$M^{\mathrm{id}}$, $P^{\mathrm{id}}$, $\sigma\mapsto \sigma_{M^{\mathrm{id}}}^{\mathrm{id}}$ and 
$M^{\pi}$, $P^{\pi}$, $\sigma\mapsto \sigma_{M^{\pi}}^{\pi}$ respectively. 
Then $M^{\pi}=\pi^{-1}M^{\mathrm{id}}$,
$P^{\pi}=\pi^{-1} P^{\mathrm{id}}$ and
$\sigma_{M^{\pi}}^{\pi} = (\pi \sigma)_{M^{\mathrm{id}}}^{\mathrm{id}}$. Hence 
\[
\sum_{\sigma\in P^{\pi}}\mathrm{sgn}(\sigma)\mathrm{sgn}(\sigma_{M^{\pi}}^{\pi})=
\sum_{\sigma\in \pi^{-1} P^{\mathrm{id}}}\mathrm{sgn}(\sigma)\mathrm{sgn}((\pi\sigma)_{M^{\mathrm{id}}}^{\mathrm{id}})
\]
so -- by changing the index of summation -- we obtain that the value of $S(G,B)$ for the two enumerations differs only by a factor of $\mathrm{sgn}(\pi)$.

Therefore we may define
\[T(G,B)\overset{\mathrm{def}}{=}|S(G,B)|\]
independently of the enumeration.

\begin{rem}\label{rem:reform}
In the terminology of the section we may express the known results. 

\begin{enumerate}
\item The Amitsur--Levitzki theorem is equivalent to the fact that for any doubly-rooted digraph $G$ on $n$ vertices with $2n$ edges we have
$T(G,\emptyset)=0$.

\item In \cite[Cor. 7, Prop. 8]{F} it is shown that for any doubly-rooted digraph $G$ on $n$ vertices with
$2n\left(\lfloor \frac{m}{2}\rfloor+1\right)$ edges 
(resp. $2\left(\left\lfloor\frac{n^2+1}{2}\right\rfloor+\left\lfloor\frac{m}{2}\right\rfloor\right)$ edges) and for a subset of edges $B$ such that $|B|=m$ we have $T(G,B)=0$.

\item Note that the previous two points also hold if the graphs have more than the given number of edges. Generally, if $T(G,B)=0$ for all $G$ on $n$ vertices with $k$ edges and $|B|=m$, then the same holds for all graphs on $n$ vertices with at least $k$ edges and $|B|=m$. Indeed, by Prop. \ref{prop:equivalence} they are equivalent to $s_k$ (resp. $s_l$ for $l\geq k$) being a polynomial identity on $M_nE^m$.

\item The anti-symmetric property of the standard identity shows that if $a,b\in A\backslash B$ are distinct edges that are parallel i.e. $a^-=b^-$ and $a^+=b^+$ then $T(G,B)=0$.
\end{enumerate}
\end{rem}

%LOWER BOUND
\section{Lower bound}\label{sec:lowerbound}

In this section we prove Theorem \ref{thm:lower} i.e. we show that the standard identity of degree $2\left\lfloor\frac m2\right\rfloor+4n-5$ is not a polynomial identity of $M_nE^m$ for any $n,m\geq 2$. We use the graph-theoretic reformulation, hence by Prop. \ref{prop:equivalence} it is enough to prove that there exists a doubly-rooted digraph $G$ with $n\geq 2$ vertices, $2\left\lfloor\frac m2\right\rfloor+4n-5$ edges and a subset of the edges $B$ with size $2\left\lfloor\frac m2\right\rfloor$ such that $T(G,B)\neq 0$ holds.

We will consider $2\m\overset{\mathrm{def}}{=}m$ fixed and even throughout the section since $2\left\lfloor\frac m2\right\rfloor$ gives the same value for $m=2\m$ and $m=2\m+1$ and the equivalence in Prop. \ref{prop:equivalence} says $|B|\leq m$.

For each integer $n\geq 2$ consider the doubly-rooted digraph $G_n=([n],A,1,2)$ with the following edges:
$$\begin{array}{ll}
a_{2\ell-1}=(1,2)& a_{m+4h-9}=(h-1,h)\\
a_{2\ell} \,\,\,\,\,\,=(2,1)& a_{m+4h-8}=(h,1)\\
 &a_{m+4h-7}=(1,h)\\
a_{m+1}=(1,1) &a_{m+4h-6}=(h,h-1)\\
a_{m+2}=(1,2)&a_{m+4n-5}=(n,n)
\end{array}$$
for $\ell=1,...,\overline{m}$ and $h=3,...,n$, where $a_{x}=(i,j)$ means that $a_{x}^-=i$ and $a_{x}^+=j$ for all $x\in[m+4n-5].$

\begin{figure}[h!]	\begin{center}	\includegraphics{ch3_figures-1.mps}	\end{center} 	\caption{$G_n$}\end{figure}

\begin{thm}\label{thm:Sn}
	Let $B=\{a_1,a_2,...,a_m\}$ and $(a_1, a_2, ..., a_{m+4n-5})$ be a fixed order of the edges. Then
	\[S\big(G_n,B\big)= \begin{cases}
	\big((\m+1)!\big)^2 &\textrm{if }n=2,\\[1mm]
	\frac23\m(\m+n-1)!\m! &\textrm{if }n\geq 3.
	\end{cases}\]
\end{thm}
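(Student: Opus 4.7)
The plan is to compute $S(G_n,B)$ by directly enumerating Eulerian trails from $1$ to $2$ in $G_n$ and summing the products $\mathrm{sgn}(\sigma)\mathrm{sgn}(\sigma_{M(B)})$ via a combinatorial classification. For $n=2$, the trails are enumerated by their vertex-sequence "skeletons"; for $n\geq 3$, the goal is to set up a recursion on $n$ that peels off the gadget at vertex $n$ (the loop $a_{m+4n-5}$ together with its four non-loop incident edges). Vertex $3$ is distinguished, since its gadget attaches to vertex $2$ rather than to a generic predecessor, so the clean recursion $S(G_n,B)=(\m+n-1)\,S(G_{n-1},B)$ will only hold for $n\geq 4$; the case $n=3$ is handled as a separate base case.

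\textbf{The case $n=2$.} An Eulerian trail in $G_2$ is specified by (i) its vertex-sequence skeleton---consisting of $\m+1$ runs of $1$'s and $\m+1$ runs of $2$'s with exactly one long run of each kind (coming from the loops $a_{m+1}$ and $a_{m+3}$)---and (ii) a labeling of the slots by the $\m+1$ parallel up-edges and the $\m$ parallel down-edges. Skeletons are indexed by pairs $(i,j)\in[\m+1]^2$ recording which runs are long, giving $(\m+1)^2$ skeletons with $(\m+1)!\,\m!$ labelings each. A first lemma shows that swapping two parallel $B$-edges simultaneously flips $\mathrm{sgn}(\sigma)$ and $\mathrm{sgn}(\sigma_{M(B)})$, so the summand is constant on each skeleton class. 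A direct positional calculation then identifies this common sign as $+1$ if $i\leq j$ (equivalently, if $a_{m+1}$ precedes $a_{m+3}$ in the trail) and $-1$ otherwise. Since $\sum_{(i,j)\in[\m+1]^2}\mathrm{sgn}=\m+1$, we obtain $S(G_2,B)=(\m+1)(\m+1)!\,\m!=((\m+1)!)^2$.

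\textbf{The case $n\geq 3$.} Vertex $n$ has in/out-degree $3$ and a single loop, so every Eulerian trail visits $n$ three times with the loop used between two consecutive visits. The six edges at $n$ therefore partition into an "extended excursion" (non-loop arrival + loop + non-loop departure) and a "simple excursion" (non-loop arrival + non-loop departure). Crucially, no edge of $B$ is incident to vertex $n$, so contracting each excursion to a virtual edge preserves the relative order of the $B$-edges in the trail, and hence $\mathrm{sgn}(\sigma_{M(B)})$ is unchanged. For $n\geq 4$ I would then enumerate the eight excursion configurations (arising from the two placements of the loop among the three visits, combined with the four pairings of non-loop arrivals to non-loop departures), express the full-trail sign $\mathrm{sgn}(\sigma)$ as a sign depending on the contracted trail times an "insertion sign", and verify that summing over configurations together with the possible insertion positions yields the factor $\m+n-1$ relative to $S(G_{n-1},B)$. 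The base case $n=3$ is handled by the same overall scheme, but separately because its excursions touch vertex $2$; after contraction one compares with an Eulerian trail computation on $G_2$, and the claimed $\tfrac{2}{3}\m(\m+2)!\,\m!$ should emerge from $S(G_2,B)=((\m+1)!)^2$.

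\textbf{Main obstacle.} The hardest part is the sign bookkeeping in the induction step. Although $\mathrm{sgn}(\sigma_{M(B)})$ is preserved under excursion insertion, $\mathrm{sgn}(\sigma)$ is not: inserting a block of $\ell$ edges at a given trail-position shifts the indices of all subsequent edges and contributes a global sign depending on both the insertion point and the indices of the inserted block. I expect the crucial auxiliary lemma to be a clean factorization of this insertion sign into a factor determined by the underlying trail and a factor determined by the excursion configuration; once in hand, the collapse of the weighted sum over the eight configurations to the single factor $\m+n-1$ (intuitively, the number of slots in the reduced trail at which the extended excursion can be placed) should follow from elementary counting. The base case $n=3$ is particularly delicate because the excursions now may start or end at vertex $2$, so the uniform structure of the $n\geq 4$ analysis is lost and the numerical factor $\tfrac{2}{3}\m$ must be extracted from a case-by-case examination of the four excursion types.
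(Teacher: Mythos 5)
Your base case $n=2$ is essentially correct and is a mild reorganization of the paper's own argument: your skeleton classes indexed by $(i,j)$ with the signed count $\#\{i\leq j\}-\#\{i>j\}=\m+1$ reproduce the paper's computation $\sum_{j=0}^{\m}(\m+1-2j)=\m+1$, and your claim that relabeling parallel edges within a skeleton is sign-preserving is exactly the content of Observation \ref{obs}. The overall architecture you propose (separate base cases $n=2,3$, plus a recursion peeling off the gadget at vertex $n$ for $n\geq 4$) also matches the paper's.

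The inductive step, however, is where all the content lies, and your proposal defers it to an expectation rather than an argument; there are two concrete gaps. First, for $n\geq 4$ there are four (not eight: two pairings of the two non-loop in-edges with the two non-loop out-edges, times two placements of the loop) excursion configurations at vertex $n$, and only for one of them does contraction yield something comparable to $G_{n-1}$ (namely $G_{n-1}$ with an extra $2$-cycle at vertex $1$ and a subdivided loop, whence the factor $\m+n-1$ as the number of insertion slots). The paper does not ``sum the insertion signs over configurations'': it shows the other three configurations each contribute $0$, via the key cancellation mechanism (Observation \ref{obs} and Corollary \ref{obs2}) that swapping two parallel subtrails of odd lengths containing an even number of $B$-edges is sign-reversing --- e.g.\ $q_1=\big((n-1,n),(n,n),(n,1)\big)$ against $q_2=(n-1,1)$. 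Your proposal contains no mechanism forcing the weighted sum over configurations to collapse, and without this cancellation it does not. Second, for $n=3$ the cancellation fails for the configuration $G_3^1$, because the relevant parallel subtrails would pass through vertex $2$ and hence contain $B$-edges; the paper must compute a genuinely new \emph{negative} correction term $S(G_3^1,B)=-\big(1+\tfrac{\m}{3}\big)\m!(\m+2)!$ by a further classification (longest $1$-trail, last edge of the trail), and only then does $S(G_3,B)=S(G_3^1,B)+(\m+2)S(G_2,B)=\tfrac23\m(\m+2)!\m!$ follow. Your hope that the value ``should emerge'' from $S(G_2,B)$ by inspecting the four excursion types underestimates this step: the answer is not a nonnegative-integer multiple of $S(G_2,B)$, and extracting the $-\tfrac{\m}{3}$ term is where most of the work in the paper's proof of this theorem is concentrated.
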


In the next lemma we give a recursive formula for $n\geq 4$, so we may prove the theorem by induction. The cases $n=2$ and $n=3$ are managed separately in Lemma \ref{lem:S2} and \ref{lem:S3}. For this section we assume that the fixed order of the edges of $G_n$ is always $(a_1, a_2, ..., a_{m+4n-5})$ and that $B\subseteq A(G_n)$ is the subset $\{a_1,a_2,...,a_m\}$.

\begin{lem}\label{lem:recursive}
	For $n\geq 4$	$$S(G_n,B)=(\m+n-1) S(G_{n-1},B).$$
\end{lem}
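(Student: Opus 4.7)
The target recursion $S(G_n,B)=(\m+n-1)\,S(G_{n-1},B)$ suggests a $(\m+n-1)$-to-$1$ sign-preserving correspondence between $P(G_n)$ and $P(G_{n-1})$. Since $\m+n-1$ equals the out-degree of vertex~$1$ in $G_{n-1}$, i.e.\ the number of visits to vertex~$1$ along any Eulerian trail of $G_{n-1}$, the plan is to produce from each $\sigma\in P(G_{n-1})$ a family of $\m+n-1$ lifts $\tau\in P(G_n)$, indexed by the visit to vertex~$1$ at which a detour through the new vertex~$n$ is inserted.

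The construction relies on a local analysis at vertex~$n$ in $G_n$, whose in-edges are $(n-1,n),(1,n),(n,n)$ and whose out-edges are $(n,n-1),(n,1),(n,n)$. Any Eulerian trail from $1$ to $2$ visits $n$ exactly three times, forming two maximal stays at $n$, one of which contains the self-loop. The two stays pair up either as two ``shuttles'' (one between $1$ and $n$, one between $n-1$ and $n$) or as two ``crosses'' (one $1\to n\to n-1$ and one $n-1\to n\to 1$), with the loop lying in either of the two stays in each case. I would argue via a sign-reversing involution on $P(G_n)$ that all but one of these four sub-configurations cancel pairwise in $S(G_n,B)$. On the surviving sub-configuration (say shuttles with the loop lying in the $1$-$n$ shuttle) there is a natural contraction: the $(n-1)$-$n$ excursion contracts to the loop $(n-1,n-1)$ of $G_{n-1}$, and the $1$-$n$ excursion (carrying the loop $(n,n)$) is erased from vertex~$1$. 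This yields a map $\Phi$ onto $P(G_{n-1})$ whose fibers are parameterised by the $\m+n-1$ positions at which the $1$-$n$ excursion could have been inserted at vertex~$1$.

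The sign analysis is immediate for the factor $\mathrm{sgn}(\tau_{M(B)})$: since every new edge has index exceeding $m$, it lies outside $M(B)=[m]$, so $\tau_{M(B)}$ and $\sigma_{M(B)}$ read off the same relative order of $a_1,\dots,a_m$, forcing $\mathrm{sgn}(\tau_{M(B)})=\mathrm{sgn}(\sigma_{M(B)})$. The ratio $\mathrm{sgn}(\tau)/\mathrm{sgn}(\sigma)$ must be obtained by tallying the transpositions that weave the five new edges into the ordering of $\sigma$ while removing the loop edge of $G_{n-1}$. The main obstacle is to verify that this parity is independent of the chosen insertion position (so that all $\m+n-1$ lifts contribute with the same sign) and to rigorously set up the sign-reversing involution on the three non-surviving sub-configurations; the clean form of the target recursion forces these pieces to align.
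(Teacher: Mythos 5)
Your high-level architecture coincides with the paper's: classify Eulerian trails by the four possible pairings of the non-loop in- and out-edges at vertex $n$ together with the placement of the loop $(n,n)$, kill three of the four configurations by sign-reversing pairings, and contract the survivor onto $G_{n-1}$ with a $(\m+n-1)$-fold multiplicity coming from the choice of departure from vertex $1$. However, two of the steps you defer are exactly where the content lies, and one of them is set up incorrectly. First, the cancellation of the three non-surviving configurations is only asserted. The paper's mechanism is not an involution pairing configurations against each other (three is odd, so that cannot work); each bad configuration vanishes \emph{internally} because it contains an odd-length closed subtrail through $n$ that is parallel to a specific pre-existing edge of $G_n$ with matching $B$-parity -- namely $\bigl((n-1,n),(n,n),(n,1)\bigr)$ paired with $(n-1,1)$, $\bigl((1,n),(n,n),(n,n-1)\bigr)$ paired with $(1,n-1)$, and $\bigl((1,n),(n,n),(n,1)\bigr)$ paired with the loop $(1,1)=a_{m+1}$ -- and swapping two parallel odd subtrails with an even number of $B$-edges is sign-reversing (Cor.~\ref{obs2}/1). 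Your argument nowhere uses these partner edges, and without them there is no involution.

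Second, you picked the wrong survivor. The configuration with the loop in the $1$--$n$ shuttle, i.e.\ $\bigl((1,n),(n,n),(n,1)\bigr)$, is precisely the third of the vanishing cases above; it \emph{must} vanish, because an odd-length (length $3$) closed excursion inserted at vertex $1$ changes $\mathrm{sgn}(\tau)$ according to the parity of the number of edges preceding the insertion point, so your $\m+n-1$ lifts would not contribute with a uniform sign -- the very obstacle you flag at the end is genuinely fatal for this choice. The correct surviving configuration is the opposite one: the bare $2$-cycle $(1,n),(n,1)$ sits at vertex $1$ (even length, no $B$-edges, hence its position among the $\m+n-1=\deg^+_{G_{n-1}}(1)$ departures from vertex $1$ is sign-irrelevant), while the loop travels with the $(n-1)$--$n$ excursion $\bigl((n-1,n),(n,n),(n,n-1)\bigr)$, which replaces the length-one loop of $G_{n-1}$ at $n-1$ by a length-three trail -- both odd, so the contraction is sign-preserving in a position-independent way. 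Your treatment of $\mathrm{sgn}(\tau_{M(B)})$ is fine, but as written the proposal does not establish the lemma.
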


Before proving the lemma we make some observations.
%SUBSECTION
\subsection{Notations}\label{Notations}

Let $G=([n],A,1,2)$ be a doubly-rooted digraph, $B\subseteq A$ and given an enumeration $A=\{a_1,a_2,...,a_k\}$. Let $\sigma\in P(G)$ i.e. $(a_{\sigma{(1)}}, a_{\sigma{(2)}}, ..., a_{\sigma{(k)}})$ is an Eulerian trail from $1$ to $2$ and let
$$\begin{array}{ll}
q_1=(a_{\sigma(i)},a_{\sigma(i+1)},...,a_{\sigma(i+r)} ),&1\leq i\leq i+r\leq k\\
q_2=(a_{\sigma(j)},a_{\sigma(j+1)},...,a_{\sigma(j+u)} ),&1\leq j\leq j+u\leq k\end{array}$$
two edge-disjoint subtrails in $G$. We denote by $|q|$ the \textit{length} of the trail $q$, so $|q_1|=r+1$, $|q_2|=u+1$ and by $|q|'$ the number of the edges in the trail $q$ that belong to $B$.

Let us notice that the set of those permutations in $P(G)$ such that $q_1, q_2$ are subtrails, can be partitioned into two subsets according to whether \textit{$q_1$ precedes $q_2$} (i.e. $i+r<j$ and this will be denoted by $q_1<q_2$) or $q_2$ precedes $q_1$:
$$P_{q_1,q_2}(G)=P_{q_1<q_2}(G)\sqcup P_{q_2<q_1}(G)$$
where
$$\begin{array}{l}P_{q_1,q_2}(G)\,\,\overset{\mathrm{def}}=\{\sigma\in P(G)\, \big| \,q_1,q_2 \textrm{ are subtrails}\},\\
P_{q_1<q_2}(G)\overset{\mathrm{def}}=\{\sigma\in P(G)\, \big| \,q_1,q_2 \textrm{ are subtrails, } q_1 \textrm{ precedes } q_2\}.\end{array}$$
Let $M(B)$ be the set defined in Eq. \ref{eq:M(B)} and $s(M,P)$ be the sum defined in \mbox{Eq. \ref{eq:s(P,M)}. Then}
$$s\left(P_{q_1,q_2}(G),M(B)\right)=s\left(P_{q_1<q_2}(G),M(B)\right)+s\left(P_{q_2<q_1}(G),M(B)\right)$$
holds by definition. In addition, if $q_1$, $q_2$ are subtrails in every Eulerian trail then
\begin{equation}\label{eq:S(G,B)}S(G,B)=s\left(P_{q_1,q_2}(G),M(B)\right).\end{equation}

We say that $q_1$ and $q_2$ are \textit{parallel} if $$a_{\sigma(i)}^-=a_{\sigma(j)}^-,\quad a_{\sigma(i+r)}^+=a_{\sigma(j+u)}^+$$ which means that $q_1$ and $q_2$ have the same starting and ending node.

The swap of two parallel subtrails $q_1$ and $q_2$ in an Eulerian trail also results an Eulerian trail, hence the elements of the two subsets defined above can be paired according to the position of $q_1$ and $q_2$ compared to each other. Let
$$\sigma^{q_1\leftrightarrow q_2}\in P_{q_1,q_2}(G)$$
denote the permutation obtained from $\sigma\in P_{q_1,q_2}(G)$ by swapping the parallel trails $q_1$, $q_2$.
Then $\sigma\mapsto \sigma^{q_1\leftrightarrow q_2}$ defines a bijection between $P_{q_1<q_2}(G)$ and $P_{q_2<q_1}(G)$, thus recalling the definition of $s(P,M)$ from Eq. \ref{eq:s(P,M)},
\begin{multline*}
s\left(P_{q_1,q_2}(G),M(B)\right)=\sum\limits_{\sigma\in P_{q_1,q_2}(G)}\mathrm{sgn}(\sigma)\mathrm{sgn}(\sigma_{M(B)})=\\
$$=\sum\limits_{\sigma\in P_{q_1<q_2}(G)}
\left( \mathrm{sgn}(\sigma)\mathrm{sgn}(\sigma_{M(B)})+ \mathrm{sgn}(\sigma^{q_1\leftrightarrow q_2})\mathrm{sgn}(\sigma^{q_1\leftrightarrow q_2}_{M(B)})\right).
\end{multline*}

\begin{obs}\label{obs} Let $\sigma\in P_{q_1,q_2}(G)$ where $q_1, q_2$ are parallel subtrails.
\begin{enumerate}
\item Denote by $h$ the number of edges between $q_1$ and $q_2$. Then
$$ \mathrm{sgn}(\sigma^{q_1\leftrightarrow q_2})= (-1)^{|q_1|(h+|q_2|)+h|q_2|}\mathrm{sgn}(\sigma).$$
\item Denote by $h'$ the number of edges in the trail between $q_1$ and $q_2$ that belong to the set $B$. Then
$$ \mathrm{sgn}(\sigma_M^{q_1\leftrightarrow q_2})= (-1)^{|q_1|'(h'+|q_2|')+h'|q_2|'}\mathrm{sgn}(\sigma_M).$$
\end{enumerate}
\end{obs}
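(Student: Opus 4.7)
The plan is to reduce both parts to the elementary combinatorial fact that reversing three consecutive blocks of sizes $\alpha, \beta, \gamma$ in a sequence (passing from $ABC$ to $CBA$) produces a permutation of sign $(-1)^{\alpha\beta + \alpha\gamma + \beta\gamma}$, because every pair drawn from two distinct blocks contributes one inversion and pairs within the same block contribute none. Rewriting $\alpha\beta + \alpha\gamma + \beta\gamma = \alpha(\beta+\gamma) + \beta\gamma$ matches exactly the exponents appearing in parts (1) and (2).

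For part (1), I would write $\sigma^{q_1 \leftrightarrow q_2} = \sigma \circ \tau$, where $\tau \in \mathfrak{S}_k$ fixes every position outside the trail-segment running from the start of $q_1$ to the end of $q_2$, and inside that segment reverses the three consecutive blocks of positions of sizes $|q_1|, h, |q_2|$. By the observation above, $\mathrm{sgn}(\tau) = (-1)^{|q_1|(h+|q_2|) + h|q_2|}$, and multiplicativity of $\mathrm{sgn}$ immediately yields the claim.

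For part (2), I would use the rank formula $\sigma_M(g) = |\{h : \sigma(j_h) \le \sigma(j_g)\}|$, which exhibits $\sigma_M$ as recording only the relative order of the $m$ values $\sigma(j_1), \dots, \sigma(j_m)$. The swap $q_1 \leftrightarrow q_2$ rearranges these values in the same block-reversal pattern, but restricted to the $B$-edges: the three adjacent sub-blocks now have sizes $|q_1|', h', |q_2|'$. Hence $\sigma_M^{q_1 \leftrightarrow q_2}$ differs from $\sigma_M$ by right composition with the analogous block reversal $\tilde{\tau} \in \mathfrak{S}_m$, whose sign is $(-1)^{|q_1|'(h'+|q_2|') + h'|q_2|'}$ by the same count. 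The main obstacle I expect is the careful bookkeeping to verify that the monotone relabeling $p_\sigma$ built into $\sigma_M = p_\sigma \circ \sigma|_M \circ j$ genuinely transports the position-level block swap to the $\mathfrak{S}_m$-level block reversal; using the rank formula above circumvents this cleanly, since only the combinatorics of the relative ordering on the $B$-edges needs to be tracked.
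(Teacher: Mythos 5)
Your proof is correct; the paper states Observation \ref{obs} without proof, and the block-reversal inversion count you give (all $\alpha\beta+\alpha\gamma+\beta\gamma$ cross-block pairs are inverted, none within a block) is exactly the justification it leaves implicit, with part (2) following because $\sigma\mapsto\sigma_{M}$ depends only on the relative order in which the edges of $B$ occur along the trail, so the swap acts on that order by the analogous reversal of blocks of sizes $|q_1|'$, $h'$, $|q_2|'$. Your remark that the rank formula for $\sigma_M$ sidesteps any bookkeeping with the normalization $p_\sigma$ addresses the only delicate point.
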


\noindent Let us emphasize the most frequently used cases.

\begin{cor}\label{obs2} Let $q_1, q_2$ be parallel subtrails in the digraph $G$.
\begin{enumerate}
\item If $|q_1|$, $|q_2|$ are odd and $|q_1|'$, $|q_2|'$ are even then
$s\left(P_{q_1,q_2}(G),M(B)\right)=0.$
\item If $|q_1|$, $|q_2|$ are even and $|q_1|'$, $|q_2|'$ are odd then
$s\left(P_{q_1,q_2}(G),M(B)\right)=0.$
\end{enumerate}
\end{cor}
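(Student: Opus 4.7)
\medskip

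\noindent\textbf{Plan.} The corollary should follow directly from Observation \ref{obs} via the involution $\sigma\mapsto \sigma^{q_1\leftrightarrow q_2}$ already introduced before the observation. Since this map is a fixed-point-free involution on $P_{q_1,q_2}(G)$ (the two parallel subtrails are edge-disjoint, so swapping them produces a different permutation, and iterating returns the original), it partitions $P_{q_1,q_2}(G)$ into two-element orbits $\{\sigma,\sigma^{q_1\leftrightarrow q_2}\}$, each consisting of one element of $P_{q_1<q_2}(G)$ and one of $P_{q_2<q_1}(G)$. I would then show that in both cases (1) and (2) the summands $\mathrm{sgn}(\sigma)\mathrm{sgn}(\sigma_{M(B)})$ and $\mathrm{sgn}(\sigma^{q_1\leftrightarrow q_2})\mathrm{sgn}(\sigma^{q_1\leftrightarrow q_2}_{M(B)})$ are opposite, so that the full sum cancels orbit by orbit.

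To verify the cancellation I would compute, using Observation \ref{obs}, the parity of the exponent
\[
E \;\defi\; |q_1|(h+|q_2|)+h|q_2| \;+\; |q_1|'(h'+|q_2|')+h'|q_2|' \pmod 2,
\]
so that $\mathrm{sgn}(\sigma^{q_1\leftrightarrow q_2})\mathrm{sgn}(\sigma^{q_1\leftrightarrow q_2}_{M(B)}) = (-1)^{E}\,\mathrm{sgn}(\sigma)\mathrm{sgn}(\sigma_{M(B)})$, and the claim reduces to showing $E$ is odd in both cases. In case (1), with $|q_1|,|q_2|$ odd and $|q_1|',|q_2|'$ even, the first piece reduces to $h\cdot 1+1\cdot 1+h\cdot 1\equiv 1\pmod 2$ while the second piece is $\equiv 0\pmod 2$; so $E\equiv 1$. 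In case (2), with $|q_1|,|q_2|$ even and $|q_1|',|q_2|'$ odd, the first piece is $\equiv 0\pmod 2$ and the second piece is $h'\cdot 1+1\cdot 1+h'\cdot 1\equiv 1\pmod 2$; again $E\equiv 1$.

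\noindent\textbf{Obstacle.} There is no real obstacle: once the two parity computations are done the statement drops out immediately from the splitting $s(P_{q_1,q_2}(G),M(B))=s(P_{q_1<q_2}(G),M(B))+s(P_{q_2<q_1}(G),M(B))$ already displayed before the observation. The only points that deserve explicit mention are that (a) one must be sure the involution matches the two summands term by term, and (b) the quantities $h$ and $h'$ appearing in Observation \ref{obs} depend on $\sigma$ (the number of edges, resp.\ of $B$-edges, strictly between the two occurrences of $q_1$ and $q_2$), but the parity argument above uses only the hypotheses on $|q_1|,|q_2|,|q_1|',|q_2|'$, so it is uniform over the orbit and the conclusion $(-1)^E=-1$ holds for every $\sigma\in P_{q_1,q_2}(G)$.
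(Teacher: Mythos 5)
Your proposal is correct and is exactly the argument the paper intends: the corollary is stated immediately after the pairing $\sigma\mapsto\sigma^{q_1\leftrightarrow q_2}$ between $P_{q_1<q_2}(G)$ and $P_{q_2<q_1}(G)$ and Observation \ref{obs}, and the paper leaves precisely your two parity computations to the reader. Your explicit remark that $h$ and $h'$ vary with $\sigma$ but drop out of the parity is a worthwhile clarification, not a deviation.
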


In the end of this subsection we introduce one more definition. We call a map $\sigma\mapsto\pi$ \textit{sign-preserving} (resp. \textit{sign-reversing}) if $\mathrm{sgn}(\pi)\mathrm{sgn}(\pi_{M})=\mathrm{sgn}(\sigma)\mathrm{sgn}(\sigma_{M})$ (resp. $\mathrm{sgn}(\pi)\mathrm{sgn}(\pi_{M})=-\mathrm{sgn}(\sigma)\mathrm{sgn}(\sigma_{M})$), where $M$ will be given by the context.

%SUBSECTION
\subsection{The inductive step}

The idea of the proof of the recursive formula is to partition the set of the Eulerian trails of $G_n$. We choose the partition so that the equation $s(P_i,M(B))=0$ holds for some $P_i$ subsets of the permutations related to the corresponding Eulerian trails. For the remaining Eulerian trails, $s(P,M(B))$ can be easily obtained from the Eulerian trails of $G_{n-1}$.

Throughout the subsection we use the notation $N\defi m+4n$.

\begin{proof}[Proof of Lemma \ref{lem:recursive}]
	One of our major observations is that we can split the node $n$ such that for the resulting graphs $G_n^1$, $G_n^2$, $G_n^3$ and $G_n^4$ the following holds:
\begin{equation}\label{Sn}S(G_n,B)=\sum\limits_{i=1}^{4}S(G_n^i,B).\end{equation}
	 
\noindent The graphs $G_n^1$, $G_n^2$, $G_n^3$ and $G_n^4$ are defined as in Fig. \ref{4graf} with same edge-enumeration as $G_n$.
\begin{figure}[!h]
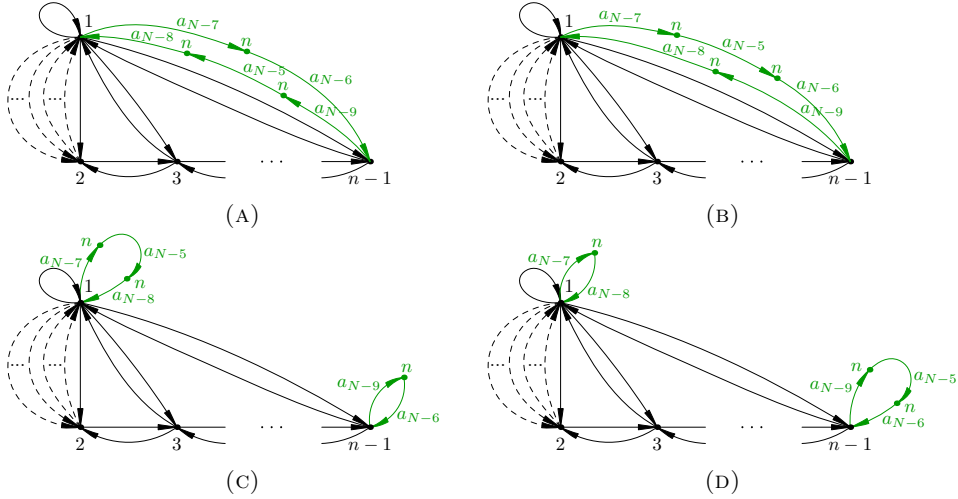

	\centering
	\begin{subfigure}[b]{0.49\textwidth}
		\centering
		\includegraphics[width=\textwidth]{ch3_figures-2.mps}
		\caption{\label{Gn1}}
	\end{subfigure}
	\begin{subfigure}[b]{0.49\textwidth}
		\centering
		\includegraphics[width=\textwidth]{ch3_figures-3.mps}
		\caption{\label{Gn2}}
	\end{subfigure}
	\begin{subfigure}[b]{0.49\textwidth}
		\centering
		\includegraphics[width=\textwidth]{ch3_figures-4.mps}
		\caption{\label{Gn3}}
	\end{subfigure}
	\begin{subfigure}[b]{0.49\textwidth}
		\centering
		\includegraphics[width=\textwidth]{ch3_figures-5.mps}
		\caption{\label{Gn4}}
	\end{subfigure}
	\caption{(A)~shows $G_n^1$, (B) shows $G_n^2$, (C) shows $G_n^3$, (D) shows $G_n^4$}
	\label{4graf}
\end{figure}

The splitting is based on the arrangement of the edges of $n$  in an Eulerian trail depending on whether $(a_{N-7},a_{N-6})$, $(a_{N-7},a_{N-5},a_{N-6})$, $(a_{N-7},a_{N-5},a_{N-8})$ or $(a_{N-7},a_{N-8})$ is a subtrail of it. Since there are no more possible arrangements of the edges of $n$ in an Eulerian trail, Eq. \ref{thm:Sn} holds.

We only have to calculate $S(G_n^4,B)$ since 
$$ S(G_n^1,B)\overset{\vphantom{f}}{=}S(G_n^2,B)=S(G_n^3,B)=0 $$
follows from Cor. \ref{obs2}/1 and Eq. \ref{eq:S(G,B)} applied to the following three cases:

\begin{enumerate}
\item $q_1:\overset{\vphantom{f}}{=}\big((n-1,n),(n,n),(n,1)\big)$ and $q_2:=(n-1,1)$ in $G_n^1$,

\item $q_3:\overset{\vphantom{f}}{=}\big((1,n),(n,n),(n,n-1)\big)$ and $q_4:=(1,n-1)$ in $G_n^2$,

\item  $q_5:\overset{\vphantom{f}}{=}\big((1,n),(n,n),(n,1)\big)$ and $q_6:=(1,1)$ in $G_n^3$
\end{enumerate}
\noindent where $(n-1,n)$ denotes $a_{N-9}$ and so on.

Let us notice that we get the graph $G_n^4$ from $G_{n-1}$ by adding a two-edges-long circle on the node $1$ and by replacing the loop on the node $n-1$ with a three-edges-long trail. Shifting the even-long circle does not change the sign of  $\sigma\in P(G_n^4)$ and neither the sign of $\sigma_{M(B)}$ because this circle does not include edges from $B$. So we only have to determine the sign of one permutation corresponding to an Eulerian trail in $G_n^4$ and the number of potential places for this circle. 

We claim that 
$$ S(G_n^4,B)\overset{\vphantom{f}}{=}(\m+n-1) S(G_{n-1},B) $$
since there are $\deg^-_{G_{n-1}}(1)=\m+2+(n-3)=\m+n-1$ different edges before which we can put the two-edges-long circle, moreover
for every $\pi \in P(G_{n-1}):$ $\mathrm{sgn}(\pi)=\mathrm{sgn}(\pi')$ where
\begin{equation*}\label{pi'}
\big(\pi'(i)\big)_{i=1}^{N-5}=\big(N-7,\,N-8,\,N-5,\,N-6,\,\pi(1),\,\pi(2),\,...\,,\pi(N-9)\big)
\end{equation*}
and we have to shift even-long parts to get an element of $P(G_n^4)$ from $\pi'$.

	 Thus we have
$$\label{S1} S(G_n,B)=(\m+n-1) S(G_{n-1},B)$$
	which is the required formula.
\end{proof}

%SUBSECTION
\subsection{The initial step}
We solve the case of two nodes (see Fig. \ref{G_2}) by the following method. We choose an Eulerian trail and generate all Eulerian trails uniquely from the chosen one by specified steps. We determine the number of the Eulerian trails the steps give in a sign-preserving, resp. sign-reversing way. Summarizing these results we get $S(G_2,B)$.

To verify the formula in Lemma \ref{lem:S3} we combine the idea of the proof of Lemma \ref{lem:recursive} and \ref{lem:S2}. That is, we partition the set of the Eulerian trails, using this we prove that some of the summands in $S(G_3,B)$ are equal to zero, some may be obtained from the Eulerian trails of $G_2$ and some need further investigation. We classify the remaining Eulerian trails by the longest trail having a specific property, resp. by their last edge. We manage these cases applying the method we use in the proof of Lemma \ref{lem:S2}.

\begin{lem}\label{lem:S2}
	$S(G_2,B)=\big((\m+1)!\big)^2.$
\end{lem}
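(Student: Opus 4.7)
The plan is to parameterize every Eulerian trail of $G_2$ from $1$ to $2$, reduce the sign analysis to one over the positions of the two loops by absorbing the remaining degrees of freedom into sign-preserving edge swaps, and then evaluate the resulting sum directly. Every Eulerian trail is determined by an ordering $(e_1^{\uparrow},\dots,e_{\overline{m}+1}^{\uparrow})$ of the $\overline{m}+1$ arrows from $1$ to $2$ (namely $a_1,a_3,\dots,a_{m-1},a_{m+2}$), an ordering $(f_1^{\downarrow},\dots,f_{\overline{m}}^{\downarrow})$ of the $\overline{m}$ arrows from $2$ to $1$ (namely $a_2,a_4,\dots,a_m$), and two insertion indices $i,j\in[\overline{m}+1]$ marking the visits of vertices $1$ and $2$ at which the loops $a_{m+1}$ and $a_{m+3}$ are traversed.

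By Cor.~\ref{obs2}, swapping any two parallel length-one $B$-subtrails is sign-preserving, so all $\overline{m}!$ orderings of the $2\to 1$ arrows and the $\overline{m}!$ orderings of the $1\to 2$ $B$-arrows contribute identically. To also move $a_{m+2}$ freely among the $1\to 2$ arrows, apply Obs.~\ref{obs} with $|q_1|=|q_2|=1$, $|q_1|'=0$, $|q_2|'=1$: the sign-product picks up $(-1)^{h'+1}$, where $h'$ counts the $B$-edges strictly between $a_{m+2}$ and the $B$-arrow with which it is swapped. The alternation in $G_2$ forces $h'=2(q-p)-1$ for the $1\to 2$-ranks $p<q$ of the two arrows, which is odd, so the sign-product is preserved. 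Hence all $(\overline{m}+1)!\,\overline{m}!$ choices of non-loop orderings give a common sign, and it remains to compute
\[
R \;:=\; \sum_{i,j=1}^{\overline{m}+1}\mathrm{sgn}(\sigma_{i,j})\,\mathrm{sgn}((\sigma_{i,j})_{M(B)})
\]
for the canonical choice $e_\ell^{\uparrow}=a_{2\ell-1}$ ($\ell\le\overline{m}$), $e_{\overline{m}+1}^{\uparrow}=a_{m+2}$, $f_\ell^{\downarrow}=a_{2\ell}$.

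Since the loops are not in $B$, varying $(i,j)$ does not disturb the trail-order of the $B$-edges, which stays $a_1,a_2,\dots,a_m$; thus $\mathrm{sgn}((\sigma_{i,j})_{M(B)})=+1$ for every $(i,j)$. For the reference position $(1,1)$, direct inspection shows $\sigma_{1,1}$ is a single $(2\overline{m}+3)$-cycle $1\mapsto m+1\mapsto m-1\mapsto\cdots\mapsto 3\mapsto m+3\mapsto m+2\mapsto m\mapsto\cdots\mapsto 2\mapsto 1$, so $\mathrm{sgn}(\sigma_{1,1})=+1$. For general $(i,j)$ one tracks $\mathrm{sgn}(\sigma_{i,j})$ by moving one loop at a time: advancing $a_{m+1}$ from visit $i$ to visit $i+1$ slides it past $e_i^{\uparrow}$ and $f_i^{\downarrow}$, and also past $a_{m+3}$ precisely when $j=i$, producing the factor $(-1)^{[j=i]}$; symmetrically, advancing $a_{m+3}$ from $j$ to $j+1$ slides it past $f_j^{\downarrow}$ and $e_{j+1}^{\uparrow}$, and past $a_{m+1}$ exactly when $i=j+1$, producing $(-1)^{[i=j+1]}$. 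Chaining these moves along any path from $(1,1)$ to $(i,j)$ gives $\mathrm{sgn}(\sigma_{i,j})=(-1)^{[j<i]}$.

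Summing,
\[
R \;=\; \binom{\overline{m}+2}{2}-\binom{\overline{m}+1}{2} \;=\; \overline{m}+1,
\]
and multiplying by the prefactor $(\overline{m}+1)!\,\overline{m}!$ produces $S(G_2,B)=((\overline{m}+1)!)^2$. The subtle point, and the only obstacle beyond bookkeeping, is the asymmetry between the two loop-collision conditions $j=i$ and $i=j+1$; it is precisely this asymmetry that prevents the sum $\sum_{i,j}\mathrm{sgn}(\sigma_{i,j})$ from collapsing to zero and instead leaves a surplus of $\overline{m}+1$.
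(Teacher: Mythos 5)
Your proof is correct and follows essentially the same strategy as the paper: enumerate all Eulerian trails of $G_2$ from a reference trail via orderings of the parallel edges and the two loop positions, show the former moves are sign-preserving, and reduce everything to the signed sum over the loop positions, which equals $\overline{m}+1$ by the same asymmetry the paper exploits (your $\sum_{i,j}(-1)^{[j<i]}$ is the paper's $\sum_{j}(\overline{m}+1-2j)$ in different bookkeeping). The only nitpick is that the sign-preservation of swapping two parallel $B$-edges follows from Obs.~\ref{obs}, not Cor.~\ref{obs2}.
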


\begin{proof}%[Proof of Lemma \ref{lem:S2}]
The first observation we have to do is that $\id\in P(G_2)$ i.e. $(a_1,a_2,...,a_{m+3})$ is an Eulerian trail in $G_2$ and we can get any Eulerian trail of $G_2$ uniquely from that with the following four steps, applying each step at most once, in the given order.

\begin{enumerate}
\item Changing the arrangement of the edges of $B$ in the first $\m$ circles.

\item Transposing the parallel edges $a_{m+2}$ and one of the $(1,2)$ edges.

\item Shifting the edge $a_{m+1}$ before one of the $(1,2)$ edges.

\item Shifting the edge $a_{m+3}$ after one of the $(1,2)$ edges.
\end{enumerate}

Let us calculate how many Eulerian trails we have. We claim that $$|P(G_2)|=\big((\m+1)!\big)^2(\m+1)$$ since we have to multiply $|\{\id\}|=1$ by $(\m!)^2$ because of step (1), and step (2), (3) and (4) implies an $(\m+1)$ factor each.

To determine $S(G_2,B)$ we have to examine whether the above steps are sign-preserving or sign-reversing, where by sign we mean the map $\sigma\mapsto\s(\sigma)\s(\sigma_{M(B)})$ as we defined it at the end of the Subsection \ref{Notations}.

It is not hard to check by Obs. \ref{obs} that the first three steps are sign-preserving. In case of step (4) it depends on the position of the chosen $(1,2)$ edge - let us denote this edge by $a$ - such that it is sign-preserving if $a_{m+1}$ precedes $a$ and sign-reversing otherwise, since $\s(\id_{M(B)})$ does not change shifting an edge which does not belong to $B$. Thus, to handle this case, we have to consider the position of $a_{m+1}$. If there are $j$ two-edges-long circles before $a_{m+1}$, then the shift of $a_{m+3}$ to the potential positions implies a $(-j+(\m-j)+1)=(\m+1-2j)$ factor. Therefore the effect of step (3) and (4) should be examined together. Since $0\leq j\leq \m$ the multiplying factor implied by these steps is
$\sum\limits_{j=0}^{\m}(\m+1-2j)=\m+1$.

Summarizing the above factors
$$S(G_2,B)=1\cdot (\m!)^2\cdot (\m+1)\cdot (\m+1)=\big((\m+1)!\big)^2$$ as we claimed.
\end{proof} 
\begin{figure}[!h]
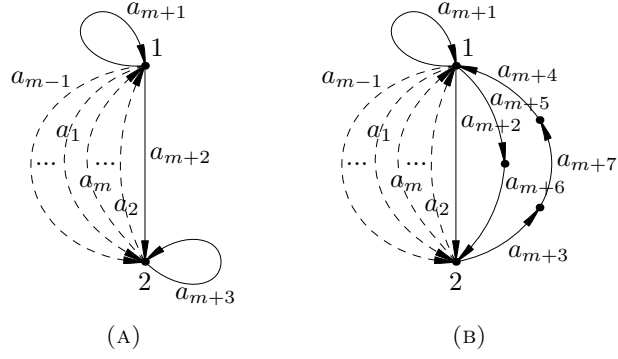

	\centering
	\begin{subfigure}[b]{0.35\textwidth}
		\centering
		\includegraphics{ch3_figures-6.mps}
		\caption{\label{G_2}}
	\end{subfigure}
	\begin{subfigure}[b]{0.35\textwidth}
		\centering
		\includegraphics{ch3_figures-7.mps}
		\caption{\label{G_3^1}}
	\end{subfigure}
	\caption{(A)~shows $G_2$ and~(B) shows $G_3^1$}
\end{figure}\label{G2,G3}

\begin{lem}\label{lem:S3}
	$S(G_3,B)=\frac23\m(\m+2)!\m!$.
\end{lem}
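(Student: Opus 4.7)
The plan is to mimic the four-way routing split at vertex $n$ used in Lemma~\ref{lem:recursive}, applied now to vertex~$3$ of $G_3$:
\[
S(G_3,B)=S(G_3^1,B)+S(G_3^2,B)+S(G_3^3,B)+S(G_3^4,B),
\]
with the $G_3^i$ defined exactly as in Figure~\ref{4graf}. Case $G_3^3$ vanishes immediately: the length-$3$ subtrail $((1,3),(3,3),(3,1))$ and the length-$1$ loop $a_{m+1}$ at vertex~$1$ are parallel, both of odd length and with no edges in $B$, so Cor.~\ref{obs2}/1 forces $S(G_3^3,B)=0$. Case $G_3^4$ plays the role of a base step: $G_3^4$ is obtained from $G_2$ by attaching a two-edge circle at vertex~$1$ through the new vertex~$3$ and by replacing the loop on vertex~$2$ by the three-edge trail $((2,3),(3,3),(3,2))$. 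Since the two-edge circle has even length and contains no $B$-edges, the shift argument of Lemma~\ref{lem:recursive} applies verbatim and yields
\[
S(G_3^4,B)=(\m+2)\,S(G_2,B)=(\m+2)\bigl((\m+1)!\bigr)^2.
\]

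The genuinely new work is in $G_3^1$ and $G_3^2$. For $n\ge 4$ these cases were killed by Cor.~\ref{obs2}/1 applied to a length-$3$ subtrail paired with the \emph{single} edge $(n-1,1)$ or $(1,n-1)$. For $n=3$, no such single edge exists: the role of $(n-1,1)=(2,1)$ is played by the $\m$ parallel edges $a_2,a_4,\ldots,a_{2\m}$, all of which lie in $B$, and analogously the role of $(1,n-1)=(1,2)$ is played by $\m+1$ parallel $B$-edges. Hence $|q_2|'=1$ rather than $0$, the parity hypothesis of Cor.~\ref{obs2} fails, and the subsums must be computed explicitly.

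I would treat $G_3^1$ by classifying its Eulerian trails according to the \emph{last} $2\!\to\!1$ subtrail they contain --- either the distinguished length-$3$ subtrail $q_1=((2,3),(3,3),(3,1))$ or one of the $\m$ parallel $B$-edges $a_2,a_4,\ldots,a_{2\m}$. With this choice fixed, the remaining Eulerian trail can be generated from a canonical representative by the step-by-step procedure of Lemma~\ref{lem:S2}: rearranging the $(1,2)$-edges of $B$ among their slots, swapping $a_{m+2}$ with one of them, shifting the loop $a_{m+1}$ into one of the admissible positions on the $1$-$2$ backbone, and inserting the length-$2$ subtrail $((1,3),(3,2))$. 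Each step is classified as sign-preserving or sign-reversing via Obs.~\ref{obs}, and the resulting signed sum should telescope analogously to $\sum_{j=0}^{\m}(\m+1-2j)=\m+1$ in the proof of Lemma~\ref{lem:S2}. Case $G_3^2$ is handled symmetrically by classifying Eulerian trails instead by their \emph{last edge}, which must be either the length-$3$ subtrail $((1,3),(3,3),(3,2))$ or one of the $\m+1$ parallel $(1,2)$-edges.

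The main obstacle is that the distinguished length-$3$ subtrail is now swapped against length-$1$ $B$-edges, so Obs.~\ref{obs}/2 produces a non-trivial factor $(-1)^{|q_1|'(h'+|q_2|')+h'|q_2|'}$ depending on the number of $B$-edges lying between the two parallel subtrails, and this has to be propagated through every generating step. After this bookkeeping is carried out, the contributions of $S(G_3^1,B)$ and $S(G_3^2,B)$ should combine with $(\m+2)\bigl((\m+1)!\bigr)^2$ from $G_3^4$ to yield the claimed $\tfrac{2}{3}\m(\m+2)!\,\m!$, the coefficient $\tfrac{2}{3}$ emerging as the residue of the odd-parity pairings that do not cancel pairwise.
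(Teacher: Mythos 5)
Your overall skeleton matches the paper's: the four-way split at vertex $3$, the vanishing of $G_3^3$ via Cor.~\ref{obs2}/1, and the identification $S(G_3^4,B)=(\m+2)S(G_2,B)$ are exactly the paper's steps. But there are two problems. First, a factual error: you claim $G_3^2$ also needs explicit computation because the role of $(1,n-1)=(1,2)$ is played by "$\m+1$ parallel $B$-edges". In fact only $\m$ of the $\m+1$ parallel $(1,2)$-edges lie in $B$; the edge $a_{m+2}=(1,2)$ does not. Taking $q_4:=a_{m+2}$ as the parallel partner of $\big((1,3),(3,3),(3,2)\big)$, both subtrails have odd length and contain no $B$-edges, so Cor.~\ref{obs2}/1 and Eq.~\ref{eq:S(G,B)} give $S(G_3^2,B)=0$ exactly as for $n\ge 4$ --- which is what the paper does. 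Only $G_3^1$ genuinely resists the parity argument, because all $\m$ of the $(2,1)$-edges lie in $B$.

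Second, and more seriously, the evaluation of $S(G_3^1,B)$ --- the heart of the lemma --- is never carried out. You describe a classification by the last $2\to 1$ subtrail and assert that after "bookkeeping" the sum "should telescope" and the factor $\tfrac23$ "emerges as the residue of the odd-parity pairings". That is precisely the step that cannot be waved through. In the paper it requires: decomposing each Eulerian trail of $G_3^1$ into $\m+2$ "$1$-trails"; splitting by whether the longest $1$-trail has $5$ or $4$ edges and, in the latter case, which $B$-edge it contains (the graphs $G^5$, $G^4$, $G^{4\sim i}$); killing several subsums by parity arguments applied to carefully chosen parallel pairs; and finally two explicit signed counts, $s\big(P^{m+6}(G^{4}),M(B)\big)=-\m!(\m+2)!$ and $s\big(P^{m+2}(G^{4\sim \m}),M(B)\big)=-\tfrac13\m!(\m+2)!$. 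The denominator $3$ comes from the fact that only $2$ of the $3!$ relative orderings of three distinguished $1$-trails are sign-preserving --- a computation your sketch does not anticipate (your proposed analogue is the telescoping sum $\sum_{j=0}^{\m}(\m+1-2j)=\m+1$, which is a different phenomenon). Without these computations the coefficient $\tfrac23\m$ is postulated rather than derived, so the proposal does not establish the lemma.
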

\begin{proof}%[Proof of Lemma \ref{lem:S3}]
We split the node $3$ based on the arrangement of its edges as we did it in the proof of Lemma \ref{lem:recursive}. Thus we get the equation 
\begin{equation*}S(G_3,B)=\sum_{i=1}^{4}S(G_3^i,B)\end{equation*}
where the graphs $G_n^1$, $G_n^2$, $G_n^3$, $G_n^4$ are defined as in Fig. \ref{4graf} with the edge-enumeration inherited from $G_n$.

For $G_3^2$, $G_3^3$ and $G_3^4$ the reasoning applied in the proof of Lemma \ref{lem:recursive} works, hence
\begin{equation}\label{eq:S(G_3,B)}S(G_3,B)=S(G_3^1,B)+(\m+2)S(G_2,B)\end{equation}
holds. For $G_3^1$ see Fig. \ref{G_3^1}.

To determine $S(G_3^1,B)$ we cannot repeat the argument used in the proof of Lemma \ref{lem:recursive} since the node $n-1$ and the node $2$ are the same, so one of the parallel subtrails that we have chosen would contain an edge from $B$.

We investigate the trails from $1$ to $1$ that do not contain the node $1$ as an intermediate node. Such trails will be called \textit{$1$-trails}. Considering the in- and outdegrees of the nodes it is obvious that every Eulerian trail of $G_3^1$ contains exactly $\overline{m}+2$ $1$-trails. 
Let us denote the unique longest $1$-trail by $q_1$ and notice that $q_1$ may consist of five or four edges. 

We split the graph $G_3^1$ according to which edges are contained by $q_1$. It is straightforward to check that there are $\m+2$ cases and we denote these graphs by $G^{5}$, $G^{4}$, $G^{4\sim i}$, $i\in[\m]$ where $G^{5}$ stands for the case when $|q_1|=5$, $G^{4}$ for the case when $|q_1|=4$, $|q_1|'=0$ (i.e. there is no edge in $q_1$ that belongs to $B$) and $G^{4\sim i}$ for the case when $|q_1|=4$, $|q_1|'=1$, $a_{2i-1}\in q_1$.

Let us notice that the subsets $P(G^{5})$, $P(G^{4})$, $P(G^{4\sim 1})$,...,$P(G^{4\sim \m})$ partition $P(G_3^1)$ thus
$$ S(G_3^{1},B)=S(G^{5},B)+S(G^{4},B)+\sum_{i=1}^{\m}S(G^{4\sim i},B).$$
We get $S(G^{5},B)=0$ from Cor. \ref{obs2}/1 and Eq. \ref{eq:S(G,B)} applied to $q_1$ and $a_{m+1}$ in $G^5$.  

To calculate $S(G^{4},B)$ and $S(G^{4\sim i},B)$ we have to examine which can be the last edge in an Eulerian trail. It also determines the length of the $1$-trails. Let us introduce the notation
\begin{equation*}P^{x}(G)\overset{\mathrm{def}}{=}\{\sigma\in P(G)\,|\, \sigma(m+7)=x\},\quad G\in\{G^4, G^{4\sim i}\}. \end{equation*}

In an Eulerian trail of the graph $G^4$ the last trail from $1$ to $2$ can be $(a_{m+5},a_{m+6})$ or $a_{2\ell-1}$ for $\ell\in[\m]$. Accordingly 
$$P(G^4)=P^{m+6}(G^4)\sqcup \bigsqcup_{\ell=1}^{\m} P^{2\ell-1}(G^4).$$
If the last edge is $a_{2\ell-1}$ ($\ell\in[\m]$), then there exists a $j\in[\m]$ such that $q_2:=(a_{m+5},a_{m+6},a_{2j})$ is a $1$-trail. Let us apply Obs. \ref{obs} to the parallel subtrails $q_2$ and $a_{m+1}$, for which -- using the notation of the observation -- $h'$ is even. Then we get 
$$s\big(P^{2\ell-1}(G^4),M(B)\big)=0$$ for all $\ell\in[\m]$, where $s(P,M)$ is defined in Eq. \ref{eq:s(P,M)}.

In an Eulerian trail of the graph $G^{4\sim i}$ the last trail from $1$ to $2$ can be $(a_{m+5},a_{m+6})$, $a_{m+2}$ or $a_{2\ell-1}$ for $\ell\in[\m]$, $\ell\neq i$. Accordingly
$$P(G^{4\sim i})=P^{m+6}(G^{4\sim i})\sqcup P^{m+2}(G^{4\sim i})\sqcup \bigsqcup_{\ell\neq i, \ell=1}^{\m} P^{2\ell-1}(G^{4\sim i}).$$
If the last edge is $a_{m+6}$ or $a_{2\ell-1}$ ($\ell\in[\m], \ell\neq i$), then there exists a $j\in[\m]$ such that $q_2:=(a_{m+2},a_{2j})$ is a $1$-trail. Applying Cor. \ref{obs2}/2 to $q_2$ and $q_1=(a_{2i-1},a_{m+3},a_{m+7},a_{m+4})$ we get
$$s\big(P^{m+6}(G^{4\sim i}),M(B)\big)=s\big(P^{2\ell-1}(G^{4\sim i}),M(B)\big)=0$$ for all $\ell\in[\m], \ell\neq i$.

Thus \begin{multline*}S(G_3^{1},B)=S(G^{4},B)+\sum_{i=1}^{\m}S(G^{4\sim i},B)=\\
=s\big(P^{m+6}(G^{4}),M(B)\big)+\sum_{i=1}^{\m}s\big(P^{m+2}(G^{4\sim i}),M(B)\big).\end{multline*}

For all $i\in[\m]$, there is a bijection between $P^{m+2}(G^{4\sim i})$ and $P^{m+2}(G^{4\sim \m})$ provided by the map $\sigma\mapsto\sigma^{a_{2i-1}\leftrightarrow a_{m-1}}$ which is sign-preserving since $\s(\sigma)$ and $\s(\sigma_{M(B)})$ also changes sign by the transpose of $a_{2i-1}$ and $a_{m-1}$, hence
\begin{equation}\label{eq:S(G_3^1,B)}S(G_3^{1},B)=
s\big(P^{m+6}(G^{4}),M(B)\big)+\m\cdot s\big(P^{m+2}(G^{4\sim \m}),M(B)\big).\end{equation}

To determine the terms above we apply an argument similar to the idea that we used in the proof of Lemma \ref{lem:S2}, i.e. we take an Eulerian trail corresponding to a permutation in $P^{m+6}(G^4)$, resp. in $P^{m+2}(G^{4\sim \m})$, generate all Eularian trail uniquely from the chosen one and investigate whether the steps were sign-preserving or sign-reversing.

For the first term, the chosen Eulerian trail is $(a_1,a_2,...,a_m,a_{m+1},q_1,a_{m+5},a_{m+6})$ where $q_1=(a_{m+2},a_{m+3},a_{m+7},a_{m+4})$ and we use the following steps, applying each steps at most once, in the given order.

\begin{enumerate}
\item Changing the arrangement of the edges of $B$ in the first $\m$ circles.

\item Choosing the position of $a_{m+1}$ and $q_1$ among the $\m+2$ $1$-trails by shifting.
\end{enumerate}

We claim that $|P^{m+6}(G^4)|=\m!(\m+2)!$ since we started from one Eulerian trail and step (1) implies an $(\m!)^2$ factor and step (2) an $(\m+2)(\m+1)$ factor.

It is not hard to check by Obs. \ref{obs} that all of the steps above are sign-preserving, so using that $\s(\sigma)=-1$ and $\s(\sigma_{M(B)})=1$, where $(\sigma(i))_{i=1}^{m+7}:=(1,2,...,m+3,m+7,m+4,m+5,m+6)$ corresponding to the Eulerian trail $(a_1,a_2,...,a_m,a_{m+1},q_1,a_{m+5},a_{m+6})$, we get
$$s\big(P^{m+6}(G^{4}),M(B)\big)=(-1)\cdot (\m!)^2\cdot(\m+2)(\m+1)=-\m!(\m+2)!.$$

For $P^{m+2}(G^{4\sim \m})$, the chosen Eulerian trail is $(a_1,a_2,...,a_{m-2},q_1,q_2,q_3,a_{m+2})$, where $q_1=(a_{m-1},a_{m+3},a_{m+7},a_{m+4})$ as previously defined, $q_2:=(a_{m+5},a_{m+6},a_m)$, $q_3:=a_{m+1}$ and the steps are as follows.

\begin{enumerate}
\item Transposing $a_m$ and $a_{2j}$, $j\in[\m]$.

\item Changing the arrangement of the edges of $B$ in the first $\m-1$ circles.

\item Choosing the position of $q_1$, $q_2$ and $q_3$ among the $\m+2$ $1$-trails by shifting.
\end{enumerate}

It is easy to check that $|P^{m+2}(G^{4\sim \m})|=1\cdot \m\cdot \left((\m-1)!\right)^2\cdot (\m+2)(\m+1)\m=\m!(\m+2)!$ and applying Obs \ref{obs} it is obvious that the first two steps are sign-preserving.

It can also been seen from Obs \ref{obs} that the effect of step (3) on the sign does not depend on the number of the two-edges long circles between the investigated $1$-trails. It depends only on the relative position of the three circles. The arrangements in which $q_1<q_2<q_3$ or $q_3<q_2<q_1$ holds are sign-preserving and the rest are sign-reversing.

Thus using that $\s(\sigma)=1$ and $\s(\sigma_{M(B)})=1$, where
$\sigma\in P^{m+2}(G^{4\sim \m})$ is the permutation related to the Eulerian trail $(a_1,a_2,...,a_{m-2},q_1,q_2,q_3,a_{m+2})$, we get
$$s\big(P^{m+2}(G^{4\sim \m}),M(B)\big)=1\cdot \m!(\m+2)!\cdot \left(-\tfrac{2}6\right) =-\tfrac13\m!(\m+2)!.$$

Substituting these results in Eq. \ref{eq:S(G_3^1,B)} we get 
$$S(G_3^1,B)=-\m!(\m+2)!+\m\cdot\left(-\tfrac13\m!(\m+2)!\right)$$
from which using Eq. \ref{eq:S(G_3,B)} and Lemma \ref{lem:S2} the formula
\begin{multline*}S(G_3,B)= \left(-\m!(\m+2)!+\m\cdot\left(-\tfrac13\m!(\m+2)!\right)\right)+\\
+(\m+2)\big((\m+1)!\big)^2=
\tfrac23\m(\m+2)!\m!.\end{multline*}
occurs, as we claimed.
\end{proof}

%SUBSECTION
\subsection{Proof of the theorem}

\begin{proof}[Proof of Theorem \ref{thm:Sn}]
For $n=2$ it is exactly the statement of Lemma \ref{lem:S2}. For $n\geq 3$ we prove the formula in the theorem by induction. If $n=3$ then the formula holds trivially from Lemma \ref{lem:S3}.

Now let us assume that $S(G_{n-1},B)=\frac23\m(\m+n-2)!\m!$ holds (for some $n\geq 4$). Applying the statement of Lemma \ref{lem:recursive} and the induction hypothesis we get
	$$ S(G_n,B)=(\m+n-1)\cdot\tfrac23\m(\m+n-2)!\m!=\tfrac23\m(\m+n-1)!\m!$$
for all $n\geq 4$ which is the required formula.
\end{proof}

\begin{rem}
Similar statements can be proved for variants of $G_n$:
\begin{enumerate}
	\item By shifting the loops $a_{m+1}$ and $a_{m+4n-5}$ to different nodes we can easily construct further graphs $H_n$ such that $S(H_n,B)\neq 0$ still holds. 
	\item However, it is not true that we can shift both loops without any restriction, e.g. for $a_{m+3} = (3,3)$, $a_{m+4n-5}=(5,5)$ 
		and $n\geq 6$ we have $S(H_n,B)=0$.
	\item The recursive formula given in Lemma \ref{lem:recursive} remains valid if we shift only the loop $a_{m+4n-5}$ to another node $1< i < n$.
\end{enumerate}
\end{rem}

%UPPER BOUND
\section{Upper bound}\label{sec:upperbound}

In this section we prove Theorem \ref{thm:upper} i.e. that $s_{4n-2}$ is a polynomial identity of $M_nE^2$ and $M_nE^3$ for all $n\geq 2$. By the next remark (noted in \cite[Sec. 4]{F}), if $s_{4n-1}$ is an identity then $s_{4n-2}$ is also an identity.

\begin{rem}\label{rem:even-odd}
If $s_k$ is a polynomial identity of $M_nE^m$ then it implies that $s_{k+1}$ is a polynomial identity too. If $k$ is even then the converse also holds by $s_k(x_1,\dots,x_k) = s_{k+1}(1,x_1,\dots,x_k)$.
\end{rem}

Therefore, by Prop. \ref{prop:equivalence} and the remark, it is enough to show the following statement.
Throughout the section $G=(V,A,s,t)$ will denote a doubly-rooted digraph and $B\subseteq A$.

\begin{thm}\label{thm:upperbound}
For all $n\geq 2 $, if $G$ has $n$ vertices and $4n-1$ edges and $|B|\leq 3$ then $T(G,B)=0$.
\end{thm}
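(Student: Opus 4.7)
I would split the proof by the size of $B$. The cases $|B|\le 1$ are essentially free: for $|B|=0$, Remark~\ref{rem:reform}(1) combined with Remark~\ref{rem:reform}(3) gives $T(G,\emptyset)=0$ for any $G$ on $n$ vertices with at least $2n$ edges, which covers us since $4n-1\ge 2n$ for $n\ge 1$; for $|B|=1$ the group $\mathfrak{S}_1$ is trivial, hence $\mathrm{sgn}(\sigma_{M(B)})\equiv 1$ and $S(G,B)=S(G,\emptyset)=0$. This reduces the theorem to $|B|\in\{2,3\}$.

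The plan for $|B|\in\{2,3\}$ is to adapt the parallel sub-trail swap of Observation~\ref{obs} and Corollary~\ref{obs2}. The graph $G$ has $4n-1$ edges on $n$ vertices, i.e.\ average degree about $4$. By Remark~\ref{rem:reform}(4) we may assume no two edges outside $B$ are parallel, which severely restricts the local structure. A pigeonhole/degree count should then force several interior vertices to have in- and out-degree at least $2$, and in fact to admit edge-disjoint pairs of parallel sub-trails $(q_1,q_2)$ that occur in every Eulerian trail from $s$ to $t$. If such a pair can be found whose lengths and $B$-lengths satisfy one of the two parity cases of Corollary~\ref{obs2}, then $\sigma \mapsto \sigma^{q_1\leftrightarrow q_2}$ is a sign-reversing involution on $P(G)$ and $S(G,B)=0$.

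For $|B|=2$ one can also exploit the identity $S(G,\emptyset)=0$: since $\mathrm{sgn}(\sigma_{M(B)})=\pm 1$ according to whether $a$ precedes $b$ in the trail, showing $S(G,B)=0$ is equivalent to showing that the signed count of trails in which $a$ precedes $b$ vanishes. This invites an induction on $n$ using vertex-splitting reductions in the spirit of the proof of Lemma~\ref{lem:recursive}, with the base $n=2$ handled by direct enumeration as in Lemma~\ref{lem:S2}. For $|B|=3$ the Grassmann sign depends on the full $S_3$-ordering of the three $B$-edges, so I would refine $P(G)$ according to this ordering and run the swap argument inside each class, choosing $(q_1,q_2)$ either disjoint from $B$ or containing a prescribed subset of $B$ depending on the parity needed to trigger Corollary~\ref{obs2}.

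The main obstacle, I expect, is the tightness of the bound: shaving Fisher's $4n$ edges down to $4n-1$ leaves exactly one edge of combinatorial slack, so in boundary configurations (e.g.\ graphs where all vertices have in-degree equal to out-degree close to $4$ and the $B$-edges are awkwardly placed) a single pair of parallel sub-trails with the correct parity need not exist. The ``substantial enhancement of the methods of \cite{S1}'' flagged in the introduction is likely required here: one probably has to chain two swaps together, or first reduce $G$ by a controlled local surgery (contracting a degree-$2$ vertex, or splitting off an Eulerian sub-circuit) that preserves the class $S(G,B)$ but creates the parity alignment needed for the final swap to kill all contributions. Completing the argument amounts to verifying that this scheme handles every residual graph that cannot be cleared by the direct involution.
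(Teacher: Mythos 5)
Your reduction of the cases $|B|\le 1$ is correct and matches the paper (for $|B|=0$ this is Amitsur--Levitzki plus Remark~\ref{rem:reform}(3), and for $|B|=1$ the sign $\mathrm{sgn}(\sigma_{M(B)})$ is identically $1$), and your observation that for $|B|=2$ the quantity $S(G,B)$ equals twice the signed count of trails in which one $B$-edge precedes the other is also valid. But from that point on the proposal is a strategy statement rather than a proof, and the gap is exactly where you place it yourself: ``completing the argument amounts to verifying that this scheme handles every residual graph.'' A single parallel-subtrail swap satisfying Corollary~\ref{obs2} does not exist in general for graphs with $4n-1$ edges, and you do not supply the machinery that replaces it. The paper's proof requires three specific devices that are absent from your outline: (i) a degree-homogenization step (Lemma~\ref{lem:cdeg}) that deletes a low-degree vertex and replaces incident trails by single edges, losing at most five edges so that the induction hypothesis on $n-1$ vertices with at least $4(n-1)-2$ edges still applies --- this is what forces $\mathrm{cdeg}(v)=4$ everywhere; (ii) Swan's identity (Lemma~\ref{lem:loops}), which does \emph{not} preserve $S(G,B)$ as your ``controlled local surgery'' would, but instead expresses it as a signed sum $\sum_c S(G_{a,c},B_{a,c})-\sum_d S(G^{a,d},B^{a,d})$ over graphs in which an edge has been converted into a loop, so that each summand must be killed separately by (i) or by further induction; and (iii) the endgame for the homogeneous case, which is not a swap at all but an induction on the ``lazy distance'' between the two $B$-edges culminating in a self-isomorphism $G\cong G^{b_1,b_2}$ that yields $S(G,B)=-S(G,B)$, together with a separate connectivity lemma for the non-2-connected configurations.

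In short, the easy cases and the general flavour (induction on $n$, swap involutions, local modifications) are right, but the proposal stops short of the actual content: it neither identifies the correct surgery (an identity that redistributes $S(G,B)$ over several graphs rather than preserving it) nor the mechanism that finally annihilates the contributions in the degree-homogeneous case. As written it would not close, because the ``boundary configurations'' you flag are not exceptional --- after homogenization \emph{every} surviving graph is of that form, and none of them is handled by a single parity-aligned swap.
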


In short, the proof of Theorem \ref{thm:upperbound} is based on homogenizing the degrees of the vertices using Lemma \ref{lem:cdeg} -- which is implicitly used in \cite{S1} -- and replacing the graph with multiple modified graphs using Lemma \ref{lem:loops}.

In the following lemmas we will assume that $G$ has $n$ vertices, $4n-1$ edges and $|B|\leq 3$.  To simplify the discussion, let us define an extended doubly-rooted digraph as
\[
G^*=(V^*,A^*,r,r)\qquad
V^*=V \cup \{r\}\qquad
A^*=A\cup \{a_0,a_{4n}\}
\]
where the new edges are defined as $a_0 = (r,s)$ and $a_{4n} = (t,r)$. Moreover, fix the enumeration $A^*=\{a_0,a_1,\dots,a_{4n}\}$.
Then $S(G,B)=S(G^*,B)$ using the enumeration inherited from $A^*$.

We may also assume without loss of generality that there is a directed Eulerian trail in $G$ from $s$ to $t$, in other words in $G^*$ from $r$ to $r$. Equivalently, $G$ is weakly connected and $\mathrm{deg}_{G^*}^-(v) = \mathrm{deg}_{G^*}^+(v)$ for all $v \in V$. Denote this common value by $\mathrm{cdeg}(v)$ that stands for \emph{corrected degree}. Explicitly,
\[\mathrm{cdeg}(v)=\mathrm{deg}_G^-(v)+\delta_{s,v}=\mathrm{deg}_G^+(v)+\delta_{t,v}\]
for all $v\in V$.

%SUBSECTION
\subsection{Degree-homogenization}

For any $n \geq 3$, denote by $(IH_n)$ the induction hypothesis of Theorem \ref{thm:upperbound} i.e. for any $2\leq k<n$, doubly-rooted digraph $G'$ on $k$ vertices with at least $4k-1$ edges, and for any subset of edges $B'$ with $|B'|\leq 3$, we have $S(G',B')=0$. By Remark \ref{rem:reform}, knowing the statement for every graph with $4k-1$ edges and every graph with at least $4k-1$ edges are equivalent.

Note that by Remark \ref{rem:even-odd}, if $(IH_n)$ holds then the same statement holds if $G'$ has only $4k-2$ edges.

\begin{lem}\label{lem:cdeg}
Let $n\geq 3$ and assume the induction hypothesis $(IH_n)$. Let $u\in V$ such that either of the following holds:
\begin{enumerate}
\item $\mathrm{cdeg}(u)\leq 3$ and there is a loop on $u$,
\item $\mathrm{cdeg}(u)\leq 3$ and $u\in \{s,t\}$,
\item $\mathrm{cdeg}(u)\leq 2$,
\item $\mathrm{cdeg}(u)= 4$, $u=s=t$ and there is a loop on $u$, or
\item $\mathrm{cdeg}(u)= 4$ and there are at least two loops on $u$.
\end{enumerate}
Then $S(G,B)=0$.
\end{lem}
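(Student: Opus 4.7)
The plan is to prove each of the five items by the same general strategy: whenever the corrected degree at $u$ is sufficiently small, the local sequence of edges at $u$ in any Eulerian circuit of $G^*$ becomes rigid enough that we may either contract these edges into a graph on $n-1$ vertices (and apply the induction hypothesis $(IH_n)$) or pair Eulerian circuits via a sign-reversing involution localized at $u$ (and apply Observation \ref{obs} or Corollary \ref{obs2}). Throughout, I would work in $G^*$ so that Eulerian circuits at $r$ correspond to the permutations in $P(G)$ entering the definition of $S(G,B)$.

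The core case is (3). If $\mathrm{cdeg}(u) \leq 2$, then the in/out-degree balance of $G^*$ together with weak connectedness (using $n\geq 3$) forces $\mathrm{cdeg}(u)\in\{1,2\}$, and after a short case analysis on how many of the at most four edge-endpoints at $u$ come from loops, the edges incident to $u$ appear as a forced consecutive block in every Eulerian circuit. Contracting this block into a single edge $e=(v_1,v_2)$ produces a graph $G'$ on $n-1$ vertices with at least $4n-3=4(n-1)+1$ edges, together with a bijection $\sigma\mapsto\sigma'$ between Eulerian circuits that is sign-compatible up to a global factor independent of $\sigma$. When the new $B'$ is chosen so that $|B'|\leq 3$, $(IH_n)$ combined with Remark \ref{rem:even-odd} yields $S(G',B')=0$ and hence $S(G,B)=0$. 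When several edges of the contracted block lie in $B$, I would use the fact that they appear in a rigid relative order to absorb them into a single fused element of $B'$, preserving $|B'|\leq 3$.

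Case (2) reduces to case (3) after looking at $u$ inside $G^*$: the extra edge $a_0$ incident to $s$ (or $a_{4n}$ incident to $t$) occupies a fixed position of every Eulerian circuit, so the effective local structure at $u$ has one fewer degree of freedom and falls into case (3). Cases (1), (4), (5) incorporate one or two loops at $u$; the extra flexibility a loop provides is the choice of which visit to $u$ absorbs it, and pairing Eulerian circuits by shifting a loop between two consecutive visits produces parallel sub-trails to which Observation \ref{obs} applies. For case (5) this is immediate from the parallel-edges vanishing criterion in Remark \ref{rem:reform}(4) whenever one of the loops lies outside $B$; when both loops lie in $B$, they are themselves parallel sub-trails of length one and Corollary \ref{obs2} applies. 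Cases (1) and (4) combine this loop-shifting pairing with the case-(3) contraction applied to the non-loop part of the local structure at $u$, the extra symmetry in (4) coming from the fact that both $a_0$ and $a_{4n}$ attach at $u$.

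The step I expect to be the main obstacle is the sign bookkeeping in case (3), specifically tracking $\mathrm{sgn}(\sigma)\,\mathrm{sgn}(\sigma_{M(B)})$ through the contraction: one must compare the contribution of the edges absorbed at $u$ (which depends on the relative positions of $B$-edges among all edges) with the corresponding contribution in $G'$, and verify that the discrepancy is independent of $\sigma$. The re-enumeration argument preceding the definition of $T(G,B)$ absorbs one layer of this, but the interaction of the restriction map $\sigma\mapsto\sigma_{M(B)}$ with the fusion of $B$-edges requires careful checking. Once this is clean for case (3), cases (1), (2), (4), (5) follow by short parity arguments, and the lemma is complete by $(IH_n)$.
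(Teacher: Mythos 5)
Your overall strategy (contract the forced local structure at $u$ to pass to a graph on $n-1$ vertices and invoke $(IH_n)$, combined with sign-reversing swaps of loops) is the same as the paper's, but two of your steps do not go through as stated. The more serious one is case (5) when \emph{both} loops $h_1,h_2$ lie in $B$ (which is possible since $|B|$ may be $2$ or $3$): you claim Corollary \ref{obs2} applies to the parallel subtrails $h_1,h_2$, but with $|q_i|=1$ odd and $|q_i|'=1$ odd neither hypothesis of that corollary is satisfied; a direct computation with Observation \ref{obs} shows the swap $h_1\leftrightarrow h_2$ is sign-\emph{preserving} in this situation, so there is no cancellation at all. The paper instead obtains $S(G,B)=2\,s\big(P_{(h_1,h_2)}(G),M(B)\big)$, contracts $(c_i,h_1,h_2)$ into a single edge to get graphs on the \emph{same} $n$ vertices with $4n-3$ edges but $|B_i|\leq 1$, and finishes with the Amitsur--Levitzki theorem ($4n-3\geq 2n$). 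This idea is absent from your proposal and cannot be replaced by a parity argument. (A smaller slip in the same case: Remark \ref{rem:reform}(4) requires \emph{both} parallel edges to lie outside $B$, so it does not cover "one loop outside $B$"; the adjacent-swap computation via Observation \ref{obs} does, but you should cite that instead.)

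The second problem is the heart of case (3), which you explicitly defer. For $\mathrm{cdeg}(u)=2$ with no loop and $u\notin\{s,t\}$ the four edges at $u$ do \emph{not} form a single forced consecutive block: the trail visits $u$ twice and one must partition $P(G)$ over the possible matchings of incoming to outgoing edges (and, crucially, over the edge $f$ following the shorter visit), exactly as in Eq. \ref{eq:cdeg}. The point of including $f$ is to make every contracted subtrail have \emph{odd} length, which is what makes the bijection with $P(G_{i,j,f})$ sign-preserving via Observation \ref{obs}; contracting an even-length block to a single edge does not give a sign factor independent of $\sigma$. Since you name this sign bookkeeping as your expected obstacle without resolving it, and since the resolution (odd-length contraction after a finer partition) is the actual content of the lemma, the proof is not complete as written.
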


\begin{figure}[!h]
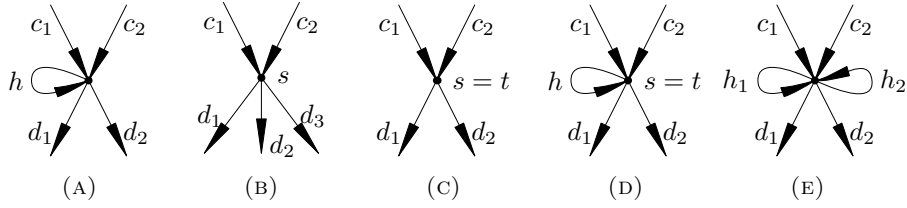

	\centering
	\begin{subfigure}[b]{0.18\textwidth}
		\centering
		\includegraphics{ch4_figures-1.mps}
		\caption{\label{fig:deg_a}}
	\end{subfigure}
	\begin{subfigure}[b]{0.18\textwidth}
		\centering
		\includegraphics{ch4_figures-2.mps}
		\caption{\label{fig:deg_b}}
	\end{subfigure}
	\begin{subfigure}[b]{0.18\textwidth}
		\centering
		\includegraphics{ch4_figures-3.mps}
		\caption{\label{fig:deg_c}}
	\end{subfigure}
	\begin{subfigure}[b]{0.18\textwidth}
		\centering
		\includegraphics{ch4_figures-4.mps}
		\caption{\label{fig:deg_d}}
	\end{subfigure}
	\begin{subfigure}[b]{0.18\textwidth}
		\centering
		\includegraphics{ch4_figures-5.mps}
		\caption{\label{fig:deg_e}}
	\end{subfigure}
	\caption{Extremal cases of Lemma \ref{lem:cdeg}}
\end{figure}

In short, we will show that $S(G,B)$ may be expressed as a sum of $S(G_\ell,B_\ell)$ where $G_\ell$ is obtained by deleting $u$ from $V(G)$ and replacing some edges in $A(G)$ so that the number of edges decreases by at most five for all $\ell$. Then $G_\ell$ will have $n-1$ vertices and at least $4(n-1)-2$ edges, hence $(IH_n)$ implies that $S(G_\ell,B_\ell)=0$. 

\begin{proof}
We will denote the loops on $u$ in $A(G)$ by $h_1, h_2, \dots$ or $h$, the incoming edges by $c_1, c_2, \dots$ and the outgoing edges by $d_1, d_2, \dots$. Given a trail $q$ (resp. trails $q_1,q_2$) in $G$ we denote by $P_q(G)\subseteq P(G)$ (resp. $P_{q_1,q_2}(G)$) the set of permutations of the edges that form an Eulerian trail and contain $q$ (resp. $q_1, q_2$) as a subtrail. As in Section \ref{Notations}, whenever we say that a map is "sign-preserving" or "sign-reversing" between two subsets of permutations, by sign we mean the map $\sigma \mapsto \mathrm{sgn}(\sigma)\mathrm{sgn}(\sigma_{M})$ where $M$ will be given by the context.

\textbf{(1)}: Assume that $u\notin \{s,t\}$, $\mathrm{cdeg}(u) = 3$ and there is a single loop $h$ on $u$, see Fig. \ref{fig:deg_a}. 
Then any Eulerian trail in $G$ has subtrails of the form $(c_i,h,d_j)$, $(c_{3-i}, d_{3-j}, f)$ for some edge $f\in A^*$ and $i,j=1,2$, therefore
\begin{equation}
\label{eq:cdeg}
P(G) = \bigsqcup P_{(c_i,h,d_j),(c_{3-i}, d_{3-j}, f)}(G).
\end{equation}
Define the graphs $G_{i,j,f}$ on $V\backslash \{u\}$ with edge-set induced from $G$ but replacing the two subtrails of length three given above by single edges $(c_i^-,d_{j}^+)$ and $(c_{3-i}^-,d_{3-j}^+)$, except in the case $f=c_i$ in when we replace the whole trail $(c_{3-i}, d_{3-i},c_i,h,d_j)$ with a single edge $(c_{3-i}^-,d_j^+)$. 

Moreover, define the subset $B_{i,j,f}\subseteq A(G_{i,j,f})$ as $B$ without $c_1,c_2,d_1,d_2,h,f$ but with those new edges for which the original subtrail corresponding to the new edge contains an odd number of edges from $B$. Then $G_{i,j,f}$ has $n-1$ vertices, $4n-5=4(n-1)-1$ edges and $|B_{i,j,f}|\leq 3$, hence $S(G_{i,j,f},B_{i,j,f})=0$ for all $i,j,f$ by $(IH_n)$. On the other hand, we may choose the ordering of the edges of each $G_{i,j,f}$ such that there is a sign-preserving bijection between $P(G_{i,j,f})$ and $P_{(c_i,h,d_j),(c_{3-i}, d_{3-j}, f)}(G)$ by Obs. \ref{obs}, hence $S(G,B)=0$ as we claimed.

Further subcases of $(1)$ -- i.e. when $\mathrm{deg}(u)<3$ or when there are multiple loops on $u$ -- may be verified analogously, still under the assumption $u\notin\{s,t\}$. The case when $u\in\{s,t\}$ is handled in the following paragraphs.

\textbf{(2)}, \textbf{(3)} and \textbf{(4)}: Assume that $u=s\neq t$, $\mathrm{cdeg}(u) = 3$ and there are no loops on $u$, see Fig. \ref{fig:deg_b}. We may apply the same argument as before with a minor twist. 
Indeed, any Eulerian trail in $G$ starts with some $d_j$ and has subtrails of the form $(c_i, d_{j+1}, f_1)$ and $(c_{3-i}, d_{j+2}, f_2)$ for $i=1,2$, $j=1,2,3$ and some edges $f_1, f_2\in A^*$ (using the convention that $d_4 = d_1$ and $d_5 = d_2$). The edges $f_1,f_2\in A^*$ exist by $u\neq t$. 

Define the graphs $G_{i,j,f_1,f_2}$ on $V\backslash \{u\}$ by replacing $(c_{i}, d_{j+1}, f_1)$ and  $(c_{3-i}, d_{j+2}, f_2)$ by single edges, and deleting $d_j$ (and setting $d_j^+$ as the source node), except if $f_1$ or $f_2$ is identical to $c_1$ or $c_2$ in which case we replace the whole continuous (odd-length) trail by a single edge. Similarly, define $B_{i,j,f_1,f_2}$ as in the previous case. Then every $G_{i,j,f_1,f_2}$ has $n-1$ vertices and $4n-1-5=4(n-1)-2$ edges, so $S(G_{i,j,f_1,f_2},B_{i,j,f_1,f_2})=0$ by $(IH_n)$. On the other hand, the analogue of Eq. \ref{eq:cdeg} holds and we may choose the ordering of the edges of $G_{i,j,f_1,f_2}$ such that there is a sign-preserving bijection between $P(G_{i,j,f_1,f_2})$ and the subsets of permutations appearing in the equation, hence $S(G,B)=0$. 
The analogous argument proves the case of $u=t\neq s$. 

Similarly, if $u=s=t$ and $\mathrm{cdeg}(u)=3$, in particular $u$ has two incoming and two outgoing edges (see Fig. \ref{fig:deg_c}), then any Eulerian trails starts with some $d_j$, ends with some $c_i$ and has a subtrail of the form $(c_{3-i},d_{3-i},f)$ so the same argument may be repeated. Further subcases of $(2)$ -- i.e. when $\mathrm{cdeg}(u)<3$ or there are more loops on $u$ --, statement (3) and (4) (see Fig. \ref{fig:deg_d}) can be verified using the same steps.

\textbf{(5)}: Assume that $u\notin\{s,t\}$, $\mathrm{cdeg}(u) = 4$ and there are exactly two loops $h_1,h_2$ on $u$, see Fig. \ref{fig:deg_e}. Denote the subtrail $(c_i, h_1, d_j)$ by $q_{i,j}$. Then
\begin{equation}
\label{eq:homogen3}
P(G)=P_{(h_1,h_2)}(G) \sqcup P_{(h_2,h_1)}(G) \sqcup \bigsqcup_{i,j=1,2} P_{q_{i,j}}(G).
\end{equation}
To deal with $P_{q_{i,j}}(G)$, define the graphs $G_{i,j}$ on $V\backslash\{u\}$ by replacing $q_{i,j}=(c_i,h_1,d_j)$ and $(c_{3-i},h_2,d_{3-j})$ by single edges. Define $B_{i,j}$ as $B$ without $h_1,h_2,c_1,c_2,d_1,d_2$ and with the new edges such that their corresponding old subtrails contain an odd number of edges from $B$. These graphs have $n-1$ vertices and $4n-5=4(n-1)-1$ edges, hence $S(G_{i,j},B_{i,j})=0$ for all $i,j$ by $(IH_n)$. On the other hand, we may choose the ordering of the edges of each $G_{i,j}$ such that there is a sign-preserving bijection between $P(G_{i,j})$ and $P_{q_{i,j}}(G)$.  

Note that if $\{h_1,h_2\}\subseteq B$ (resp. $\{h_1,h_2\}\nsubseteq B$) then the map $\sigma \mapsto \sigma^{h_1\leftrightarrow h_2}$ transposing the indices of $h_1$ and $h_2$ gives a sign-preserving (resp. sign-reversing) bijection between $P_{(h_1,h_2)}(G)$ and $P_{(h_2,h_1)}(G)$. If $\{h_1,h_2\}\nsubseteq B$ then the summands in $S(G,B)$ corresponding to $P_{(h_1,h_2)}(G)$ and $P_{(h_2,h_1)}(G)$ cancel out each other.
Therefore in this case $S(G,B)=0$ by Eq. \ref{eq:homogen3}.

If $\{h_1,h_2\}\subseteq B$ then from Eq. \ref{eq:homogen3} and the last two paragraphs we get that
\begin{equation}\label{eq:double_loop}
S(G,B) = 2\cdot s\big(P_{(h_1,h_2)}(G),M(B)\big).
\end{equation}
For $i=1,2$ define the graph $G_i$ from $G$ on the same vertex set $V$ by replacing $(c_i,h_1,h_2)$ by a single edge, and define $B_i$ from $B$ without $c_i,h_1,h_2$ and with the corresponding new edge if and only if $c_i\in B$. 
Then $G_i$ has $n$ vertices, $4n-3$ edges but $|B|\leq 1$. Therefore, by $4n-3\geq 2n$ and the Amitsur--Levitzki theorem (see \cite{AL}), we get $S(G_i,B_i)=0$ for $i=1,2$. On the other hand, there is a sign-preserving bijection between $P_{(h_1,h_2)}(G)$ and $P(G_1) \sqcup P(G_2)$ with an appropriate arrangement chosen on $G_1$ and $G_2$, hence by Eq. \ref{eq:double_loop}, $S(G,B)=2(S(G_1,B_1) + S(G_2,B_2))=0$. 

Further cases of $(5)$, i.e. if $u\in\{s,t\}$ or there are more loops on $u$ may be verified using the same steps.
\end{proof}

\begin{rem}
Note that if we assume $|A(G)|=4n-2$ instead of $4n-1$ then we cannot prove the second part of the lemma by the same inductive argument, as we had to drop 5 edges in the construction of $G_{i,j,f_1,f_2}$.
\end{rem}

%SUBSECTION
\subsection{Swan's lemma}

For the next lemma we need some preliminary definitions. 
For given $a,c,d\in A$ let us define the doubly-rooted digraphs $G_{a,c}$ and $G^{a,d}$ as digraphs on the same vertex set and roots as $G$ but with edge sets
\begin{align*}
A(G_{a,c})=
&A\backslash\{a,c\}\cup\{a^+ \overset{\overline{a}}{\longrightarrow} a^+,\, 
c^- \overset{c_{a}}{\longrightarrow} a^+\},\\
A(G^{a,d})=
&A\backslash\{a,d\}\cup\{a^+ \overset{\overline{a}}{\longrightarrow} a^+,\, 
a^- \overset{d^{a}}{\longrightarrow} d^+\}.
\end{align*}
We will only apply the constructions if $a^-=c^+$ and if $a^+ = d^-$.

The arrangement of the edges of $G_{a,c}$ (resp. $G^{a,d}$) is derived from the arrangement of the edges of $G$ by replacing $a$ by $\overline{a}$ and $c$ by $c_a$ (resp. $d$ by $d^a$).
Similarly, denote by $B_{a,c}\subseteq A(G_{a,c})$ (resp. $B^{a,d}\subseteq A(G^{a,d})$) the set obtained from $B$ by replacing $a$ by $\overline{a}$ and $c$ by $c_a$ (resp. $d$ by $d^a$) if any of them are contained by $B$. In particular $M(B_{a,c}) = M(B) = M(B^{a,d})$ using the notations of Section \ref{sec:reformulation}.

Although $a_0,a_{4n}\in A^*\backslash A$ are not edges of $G$, we also define $G_{a,a_{0}}$ (resp. $G^{a,a_{4n}}$) as the subdigraph of $(G^*)_{a,a_0}$ (resp. $(G^*)^{a,a_{4n}}$) spanned by all the vertices except $r$. Explicitly, the edge set of $G_{a,a_{0}}$ (resp. $G^{a,a_{4n}}$) is
$(A\backslash\{a\})\cup\{\overline{a}\}$
with pair of roots $(a^+,t)$ (resp. $(s,a^-)$) in place of $(s,t)$.

\begin{lem}\label{lem:loops}
For any $a\in A$ that is not a loop,
\[
S(G,B)=
\sum_{c\in A^*:\,c^+=a^-} S(G_{a,c},B_{a,c}) - 
\sum_{d\in A^*:\,d^-=a^+} S(G^{a,d},B^{a,d})
\]
with the enumerations given above.
\end{lem}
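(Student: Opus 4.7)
The plan is to establish the identity term-by-term over Eulerian trails of $G$. First, I partition $P(G)$ according to the edge $c(\sigma)\in A^*$ immediately preceding $a$ in the extended trail $(a_0,a_{\sigma(1)},\dots,a_{\sigma(4n-1)},a_{4n})$, and in parallel according to the successor $d(\sigma)\in A^*$. The key encoding is this: any $\tau\in P(G_{a,c})$ is obtained from a unique $\sigma\in P(G)$ with $c(\sigma)=c$ by collapsing the subtrail $(c,a)$ into the single edge $c_a$ (or, when $c=a_0$, simply removing $a$ and resetting the source root to $a^+$) and then inserting the loop $\overline{a}$ at some valid position, i.e., one where the resulting partial trail is at $a^+$. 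A symmetric description, using the collapse of $(a,d)$ into $d^a$, encodes $P(G^{a,d})$ in terms of $P(G)$.

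The heart of the argument is a per-$\sigma$ identity. For each $\sigma\in P(G)$ with $a$ at position $p$, let $X^c_\sigma$ (resp.\ $X^d_\sigma$) denote the sum of $\mathrm{sgn}(\tau)\,\mathrm{sgn}(\tau_{M(B_{a,c})})$ over all $\tau\in P(G_{a,c(\sigma)})$ (resp.\ $\tau\in P(G^{a,d(\sigma)})$) that encode $\sigma$. The target is
\[
X^c_\sigma - X^d_\sigma \;=\; \mathrm{sgn}(\sigma)\,\mathrm{sgn}(\sigma_{M(B)}),
\]
which, when summed over $\sigma$, yields $\sum_c S(G_{a,c},B_{a,c}) - \sum_d S(G^{a,d},B^{a,d}) = S(G,B)$. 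To prove the per-$\sigma$ identity I compare the sets of valid insertion positions for $\overline{a}$ on the two sides; a direct inspection of vertex targets shows that they coincide except at the single position $q=p$. On the $c$-side, the base has $c_a$ at position $p-1$ with $c_a^+=a^+$, making $q=p$ valid; on the $d$-side, the base has $c=\sigma(p-1)$ at position $p-1$ with $c^+=a^-\ne a^+$, making $q=p$ invalid. For every common position $q\ne p$, the trails $\tau^c_{\sigma,q}$ and $\tau^d_{\sigma,q}$ coincide as elements of $\mathfrak{S}_{4n-1}$: the replacements $c\leftrightarrow c_a$ and $d\leftrightarrow d^a$ preserve edge indices and $M(B_{a,c})=M(B)=M(B^{a,d})$, so both contributions are equal and cancel in the difference. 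The lone surviving trail is $\tau^c_{\sigma,p}$, obtained by inserting $\overline{a}$ immediately after $c_a$; an explicit index-level comparison gives $\tau^c_{\sigma,p}=\sigma$, so its contribution is exactly $\mathrm{sgn}(\sigma)\,\mathrm{sgn}(\sigma_{M(B)})$.

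The boundary cases $c=a_0$ (so $p=1$ and $a$ is the first edge of $\sigma$) and $d=a_{4n}$ (so $p=4n-1$ and $a$ is the last edge) fit into the same framework: the shifted source root $a^+$ of $G_{a,a_0}$ and target root $a^-$ of $G^{a,a_{4n}}$, together with the non-loop assumption on $a$ (which forces $s=a^-\ne a^+$ in the first case and $t=a^+\ne a^-$ in the second), make $q=1$ (resp.\ $q=4n-1$) extra on the $c$-side, with $\tau^c=\sigma$ by the same index-level computation. The main obstacle I anticipate is the $M$-sign bookkeeping when $a\in B$, where $\overline{a}$ itself participates in the $M$-permutation; this is handled uniformly because the agreement $\tau^c_{\sigma,q}=\tau^d_{\sigma,q}$ at common positions is an identity in $\mathfrak{S}_{4n-1}$, from which equality of all derived sign statistics (the $M$-signs in particular) follows automatically.
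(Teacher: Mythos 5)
Your proof is correct, and it arrives at the statement by a genuinely different bookkeeping than the paper, although the underlying cancellation is the same. The paper never fixes an individual Eulerian trail of $G$: it proves three partition identities, $P(G)=\bigsqcup_c P_{c,a}$, $P(G_{a,c})=P_{c,a}\sqcup\bigsqcup_d P_{c,d}$ and $P(G^{a,d})=\bigsqcup_c P_{c,d}$, where $P_{c,d}$ (resp.\ $P_{c,a}$) consists of the Eulerian trails of $(G^*)_{a,c}$ containing $(c_a,d)$ (resp.\ $(c_a,\overline{a})$) as a subtrail, and then lets the double sum $\sum_c\sum_d s(P_{c,d})$ telescope out of $\sum_c s(P(G_{a,c}))-\sum_d s(P(G^{a,d}))$. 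You instead fiber $P(G_{a,c(\sigma)})$ and $P(G^{a,d(\sigma)})$ over each $\sigma\in P(G)$ via the insertion position of the loop $\overline{a}$ and cancel the common positions pairwise; your common position $q\neq p$ produces exactly an element of the paper's $P_{c(\sigma),d(\sigma)}$, viewed simultaneously inside $P(G_{a,c(\sigma)})$ and $P(G^{a,d(\sigma)})$ via the same index-permutation, so the two arguments use the identical facts (indices preserved under the replacements $a\mapsto\overline{a}$, $c\mapsto c_a$, $d\mapsto d^a$; $M(B_{a,c})=M(B)=M(B^{a,d})$; and the non-loop hypothesis $a^-\neq a^+$ to isolate the one extra position). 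What your version buys is a completely local, per-trail verification in which the surviving term is visibly $\sigma$ itself; what the paper's version buys is that one never has to argue about "valid insertion positions" or check that the two fibers have matching cardinalities off the exceptional position, since the additivity of $s$ over the three disjoint unions does all the counting at once. Either way the key point --- that a trail of $G_{a,c}$ in which $\overline{a}$ does not immediately follow $c_a$ reappears verbatim as a trail of some $G^{a,d}$ and contributes with the same sign --- is present and correctly justified in your write-up.
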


\begin{figure}[h!]
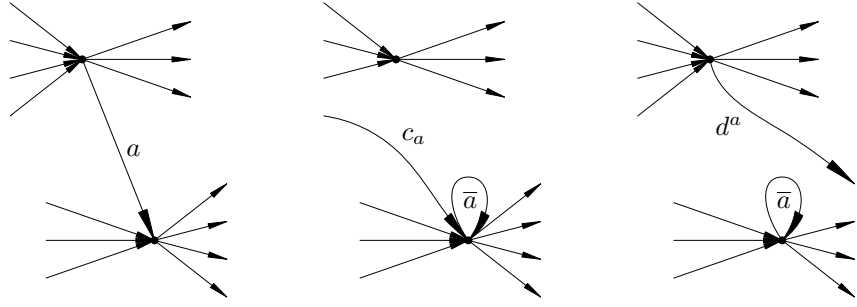

	\begin{tabular}{ll}
	\includegraphics{ch4_figures-11.mps}
	\hspace{1cm}
	\includegraphics{ch4_figures-12.mps}
	\hspace{1cm}
	\includegraphics{ch4_figures-13.mps}
	\end{tabular}
	\caption{Graph modifications in Lemma \ref{lem:loops}}\label{fig:loops}
\end{figure}

Note that Fig. \ref{fig:loops} may be misleading since $c$ (resp. $d$) may also be a loop on $a^-$ (resp. $a^+$), but the statement still applies.

\begin{proof}
To avoid separate arguments for the exceptional cases of $\{a^+,a^-\}\cap\{s,t\}\neq \emptyset$, we will work with the Eulerian trails of $G^*$ and its suitable modifications (see above), since $G$ and $G^*$ have the same Eulerian trails (as doubly-rooted digraphs), and this bijection of trails preserves the associated signs.

Recall the definition of $M(B)$, $P(G)$ and $s(P,M)$ from Eq. \ref{eq:M(B)}, \ref{eq:P(G)} and \ref{eq:s(P,M)}. By construction we may identify $P(G)$ with $P(G^*)$. Moreover, using the enumerations given before the lemma, we have
$M(B_{a,c})=M(B)=M(B^{a,d})$, hence
$\sigma_{M(B_{a,c})}=\sigma_{M(B)}=\sigma_{M(B^{a,d})}$
for any $c,d\in A^*$ and $\sigma \in P(G^*)$. Therefore we consider $M(B)$ fixed and simply write $s(P(G))$ instead of $s(P(G),M(B))$.

We have to show that
\[
s(P(G))
= \sum_c s(P(G_{a,c}))
- \sum_d s(P(G^{a,d}))
\]
where the sums run on all $c,d\in A^*$ such that $c^+=a^-$ and $d^-=a^+$. The proof is based on building partial bijections between sets of permutations. 

For all $c,d \in A^*$ such that $c^+=a^-$ and $d^-=a^+$ denote by $P_{c,d}$ the set of permutations of $\{0,1,\dots,4n\}$ such that 
$(a_{\sigma(0)},\dots,a_{\sigma(4n)})$
is an Eulerian trail of $(G^*)_{a,c}$ and $(c_a,d)$ is a subtrail of it. Similarly, denote by $P_{c,a}$ the set of permutations where $(c_a,\overline{a})$ is a subtrail of the Eulerian trail $(a_{\sigma(0)},\dots,a_{\sigma(4n)})$ of $(G^*)_{a,c}$.

We show that it is enough to verify the following equations:
\begin{align}
\centering
P(G)		& = \bigsqcup_{c}P_{c,a}
\label{eq:p1}\\
P(G_{a,c})	& = P_{c,a} \sqcup \bigsqcup_{d}P_{c,d} \qquad (\forall c\in A^*:\,c^+=a^-)
\label{eq:p2}\\
P(G^{a,d})	& = \bigsqcup_{c}P_{c,d}		\hspace{1.7cm} (\forall d\in A^*:\,d^-=a^+)
\label{eq:p3}
\end{align}
where the unions run on all $c,d\in A^*$ such that $c^+=a^-$ and $d^-=a^+$. Indeed, since $P\mapsto s(P)$ is additive under disjoint union, we have
\begin{gather*}
s(P(G))=\sum_{c}s(P_{c,a}) = \\
= \sum_{c} \big(s(P_{c,a}) + \sum_d s(P_{c,d})\big)
- \sum_{d} \bigg(\sum_{c}s(P_{c,d})\bigg) \\
= \sum_c s(P(G_{a,c})) - \sum_d s(P(G^{a,d}))
\end{gather*}
and that was the claim.

The verifications of Eq. \ref{eq:p1}, \ref{eq:p2} and \ref{eq:p3} are done by case-checking. In the case of Eq. \ref{eq:p1} (resp. Eq. \ref{eq:p3}) one has to check which edge precedes $a$ in an Eulerian trail of $G$ (resp. $d^a$ in an Eulerian trail of $G^{a,d}$). In Eq. \ref{eq:p2} one has to check which edge follows $c_a$ in $G_{a,c}$: it is either $\overline{a}$ or one of the $d$'s. Since $a\in A$, it cannot be the first or the last edge of an Eulerian trail of $G^*$. The claim follows.
\end{proof}

The next claim is a combination of Lemma \ref{lem:cdeg} and \ref{lem:loops} that is used multiple times in the proof of Theorem \ref{thm:upperbound}.

We will need the notion of the \emph{opposite} of a doubly-rooted digraph $G$, denoted by $G^{\mathrm{op}}$. The opposite is constructed from $G$ by reversing the direction of every edge, and transposing $s$ and $t$ (while keeping $B$ fixed). Note that $S(G,B) = S(G^\mathrm{op},B)$ using $|A|=4n-1$.

\begin{cor}\label{cor:path}
Let $n\geq 3$ and assume $(IH_n)$. Let $f_1,f_2,\dots, f_j$ be an undirected path in the digraph $G$ for some $j\geq 1$ (with no repeated vertices), starting at $s$ or $t$ and ending at some $v_0\in V$. Assume also that $f_j^+ = v_0$, moreover for every vertex $v\neq v_0$ touched by the path, we have $\mathrm{cdeg}(v) = 4$ and either $v\in\{s,t\}$ or there is exactly one loop on $v$. Then 
\[S(G,B) = \sum_{d\in A^*:\,d^-=f_j^+} - S(G^{f_j,d},B^{f_j,d}).\]
In particular, if $\mathrm{cdeg}(v_0)=4$ and there is a loop $h$ on $v_0$ then
\[S(G,B) = - S(G^{f_j,h},B^{f_j,h}).\]
\end{cor}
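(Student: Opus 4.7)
The plan is to prove the corollary by induction on the path length $j$ (for all admissible graphs $G$ simultaneously), invoking Lemma \ref{lem:loops} on the last edge $f_j$. Since $f_j$ is not a loop (the path has no repeated vertices), the lemma yields
\[
S(G,B) = \sum_{c\in A^*,\,c^+=f_j^-} S(G_{f_j,c},B_{f_j,c}) \;-\; \sum_{d\in A^*,\,d^-=v_0} S(G^{f_j,d},B^{f_j,d}),
\]
and the second sum already matches the claimed right-hand side, so it suffices to show that the ``forward'' sum vanishes.

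For each $c$ we kill $S(G_{f_j,c},B_{f_j,c})$ via Lemma \ref{lem:cdeg} applied to the vertex $u \defi f_j^-$. In $G_{f_j,c}$ this vertex loses both $f_j$ and $c$, so $\mathrm{cdeg}(u)$ drops from $4$ to $3$. In the base case $j=1$, $u \in \{s,t\}$ remains a root, so Lemma \ref{lem:cdeg}(2) applies for every $c \in A$; in the inductive case $j \ge 2$, whenever $c$ is different from the loop $h_{j-1}$ on $u$ the loop is preserved and Lemma \ref{lem:cdeg}(1) applies. The only surviving term is $c=h_{j-1}$ for $j\ge 2$, which one handles by applying the induction hypothesis to the shorter path $f_1,\dots,f_{j-1}$ in $G_{f_j,h_{j-1}}$: its interior-vertex hypotheses are inherited from $G$ (only $u$ and $v_0$ are modified), and the induction yields
\[
S(G_{f_j,h_{j-1}},B_{f_j,h_{j-1}}) = -\sum_{d'}\,S\bigl((G_{f_j,h_{j-1}})^{f_{j-1},d'},(B_{f_j,h_{j-1}})^{f_{j-1},d'}\bigr).
\]
The $G^{a,d}$ construction places a brand-new loop $\overline{f_{j-1}}$ on $u$ while $\mathrm{cdeg}(u)$ stays at $3$, so Lemma \ref{lem:cdeg}(1) annihilates each summand on the right, closing the induction.

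The ``in particular'' clause follows by inspecting the backward sum: when $\mathrm{cdeg}(v_0)=4$ and $v_0$ carries a loop $h$, then for every $d\neq h$ in the sum (including $d=a_{4n}$ when $v_0=t$) the graph $G^{f_j,d}$ retains the original loop $h$ on $v_0$ and gains the new loop $\overline{f_j}$ there, while $\mathrm{cdeg}(v_0)$ is unchanged at $4$; hence Lemma \ref{lem:cdeg}(5) forces the corresponding $S$-value to be zero, and only $d=h$ contributes.

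The main obstacle is the exceptional case $c=h_{j-1}$: one must verify that the orientations of $f_1,\dots,f_{j-1}$ are compatible with the ``last-edge-points-inward'' requirement of the corollary applied to the shorter path (i.e.\ $f_{j-1}^+ = u$), and that every $(G_{f_j,h_{j-1}})^{f_{j-1},d'}$ really satisfies $\mathrm{cdeg}(u)\le 3$ together with a loop at $u$. In the base case $j=1$ with $f_1^-=s$ a subsidiary subtlety arises for the choice $c=a_0$: the modification demotes $s$ from root status in $G_{f_1,a_0}$ so Lemma \ref{lem:cdeg}(2) does not apply directly; here one iterates Lemma \ref{lem:loops} on an outgoing edge at $s$ to drive $\mathrm{cdeg}(s)$ down to $2$ and concludes via Lemma \ref{lem:cdeg}(3).
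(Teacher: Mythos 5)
Your argument is essentially the paper's own proof: apply Lemma \ref{lem:loops} to $f_j$, kill each forward term $S(G_{f_j,c},B_{f_j,c})$ by Lemma \ref{lem:cdeg}/1 or /2 unless $c$ is the loop on $f_j^-$, handle that remaining term by induction on $j$ with the new loop $\overline{f_{j-1}}$ annihilating every resulting summand (the paper resolves your orientation worry simply by passing to the opposite graph $(G_{f_j,c})^{\mathrm{op}}$, legitimate since $S(G,B)=S(G^{\mathrm{op}},B)$), and settle the ``in particular'' clause via Lemma \ref{lem:cdeg}/5. The $c=a_0$ subtlety you raise is genuine but is not treated in the paper's proof either; it is harmless because in every application of the corollary the start vertex either carries a loop or satisfies $s=t$, so Lemma \ref{lem:cdeg}/1 or /2 still disposes of that term.
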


\begin{figure}[h!]
	\includegraphics{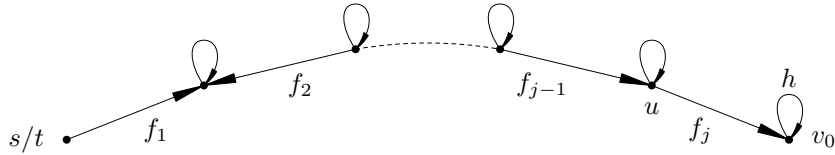}
	\caption{A path as in Corollary \ref{cor:path}}
\end{figure}

\begin{proof}
For the first claim, by Lemma \ref{lem:loops}, we have to prove that $S(G_{f_j,c},B_{f_j,c})=0$ for all $c$ such that $c^+ = f_j^- =: u$. We may apply induction on $j$. 
Note that $\mathrm{cdeg}_{G_{f_j,c}}(u) = 3$. If $u \in \{s,t\}$ (in particular if $j=1$) then $S(G_{f_j,c},B_{f_j,c})=0$ by Lemma \ref{lem:cdeg}/2. Assume that $u\notin\{s,t\}$, in particular $j>1$. By the assumption, $u$ has a loop attached to it. If $c$ is not this loop itself, then $u$ has a loop in $G_{f_j,c}$ too, hence $S(G_{f_j,c},B_{f_j,c})=0$ by Lemma \ref{lem:cdeg}/1. 

If $c$ is the loop on $u$, then we may apply the induction on $j$ for the path $f_1,\dots ,f_{j-1}$ in $G_{f_j,c}$. The only problem that may arise is that instead of  $f_{j-1}^+ = u$ we may have $f_{j-1}^- = u$. In that case we can replace $G_{f_j,c}$ by its opposite graph $(G_{f_j,c})^\mathrm{op}$, and apply the induction on that graph. By the induction hypothesis
\[S(G_{f_j,c},B_{f_j,c}) = \sum_{d\in A^*(G_{f_j,c}):\,d^-=f_{j-1}^+} - S\big((G_{f_j,c})^{f_{j-1},d},(B_{f_j,c})^{f_j,d}\big).\]
However, 
$\mathrm{cdeg}_{(G_{f_j,c})^{f_{j-1},d}}(u) = \mathrm{cdeg}_{G_{f_j,c}}(u) = 3$
for any $d$, and $\overline{f_{j-1}}$ is a loop on $u$, hence every summand is zero by Lemma \ref{lem:cdeg}/1. Therefore the first claim holds by induction.

For the "in particular" part, note that if $d\neq h$ then in $G^{f_j,d}$ there are at least two loops on $v_0$. On the other hand $\mathrm{cdeg}(v_0)=4$ still holds in $G^{f_j,d}$, hence $S(G^{f_j,d},B^{f_j,d})=0$ by Lemma \ref{lem:cdeg}/5. 
\end{proof}

%SUBSECTION
\subsection{The non-2-connected case}

First we will prove Theorem \ref{thm:upperbound} for a special case. Let us define \emph{condition} $(D)$ as follows: $\mathrm{cdeg}(v)=4$ for all $v\in V$, moreover one of the following holds:
\begin{itemize}
\item $s\neq t$ and there is exactly one loop on each node, or
\item $s=t$ and there is exactly one loop on each element of $V\backslash \{s\}$.
\end{itemize}
Note that condition $(D)$ is invariant under taking the opposite of $G$.

\begin{lem}\label{lem:conn}
Let $n\geq 3$ and assume that $(IH_n)$ and $(D)$ hold for $G$. If there is a $v\in V$ such that $G\backslash\{v\}$ is not weakly connected and there is a weakly connected component of $G\backslash\{v\}$ that contains neither $s$ nor $t$, then $S(G,B)=0$.
\end{lem}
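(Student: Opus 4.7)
The plan is to apply Swan's lemma (Lemma~\ref{lem:loops}) to a well-chosen edge $a: v \to u$ with $u \in C$, use Lemma~\ref{lem:cdeg} to annihilate the great majority of the resulting summands, and dispose of the remaining few via Corollary~\ref{cor:path} and the induction hypothesis $(IH_n)$. If $G$ is not weakly connected, then $P(G) = \emptyset$ and $S(G, B) = 0$ trivially; otherwise the weak connectivity of $G$, together with the assumption that $v$ separates $C$ from $\{s, t\}$, guarantees that such an edge $a$ exists.

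Applying Lemma~\ref{lem:loops} to $a$ gives
\[
S(G, B) = \sum_{c:\,c^+ = v} S(G_{a, c}, B_{a, c}) - \sum_{d:\,d^- = u} S(G^{a, d}, B^{a, d}).
\]
For every $d \ne h_u$ (the loop on $u$, which exists by $(D)$ because $u \in C \subseteq V \setminus \{s, t\}$), the graph $G^{a, d}$ satisfies $\mathrm{cdeg}(u) = 4$ and carries two loops on $u$ (the original $h_u$ and the freshly-created $\overline{a}$), so Lemma~\ref{lem:cdeg}(5) gives $0$. For every $c \ne h_v$ (setting aside the special $c = a_0$, which can arise only when $v = s$), the graph $G_{a, c}$ has $\mathrm{cdeg}(v) = 3$ with $v$'s loop still in place, so Lemma~\ref{lem:cdeg}(1) -- or Lemma~\ref{lem:cdeg}(2) when $v \in \{s, t\}$ -- kills that term. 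The root-shifted terms $c = a_0$ and (when applicable) $d = a_{4n}$ are handled analogously via the conventions preceding Lemma~\ref{lem:loops} together with parts (2)--(4) of Lemma~\ref{lem:cdeg}.

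The at most two surviving terms are $c = h_v$ (only when $v \notin \{s, t\}$) and $d = h_u$. The $c = h_v$ term moves the loop from $v$ to $u$, so in the resulting graph $u$ has two loops and $v$ loses its loop; a further application of Corollary~\ref{cor:path} along an undirected path from $s$ (or $t$) through $v$ into $C$ -- whose internal vertices all have loops by $(D)$ -- reduces this summand either to $0$ by Lemma~\ref{lem:cdeg}, or to a graph on $n-1$ vertices covered by $(IH_n)$. The $d = h_u$ term is the central difficulty: the operation swapping $a$ with $h_u$ is essentially a relabelling, so $G^{a, h_u}$ is multigraph-isomorphic to $G$ and $S(G^{a, h_u}, B^{a, h_u}) = \varepsilon\, S(G, B)$ for an explicit sign $\varepsilon \in \{\pm 1\}$ arising from the transposition of edge-indices $(a, h_u)$ and its action on $\sigma_{M(B)}$. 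When $\varepsilon = +1$ the Swan identity collapses to $S(G, B) = 0$ immediately; when $\varepsilon = -1$ one extracts a second, independent equation by running Corollary~\ref{cor:path} with a path to a different target vertex in $C$, and solves the resulting linear system.

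The hard part, and the main obstacle, is exactly this tautology trap from the $d = h_u$ term and the accompanying sign bookkeeping. One must verify that in at least one choice of the reduction data (edge $a$, target vertex, or combination of paths) the sign $\varepsilon$ cooperates, and that the recursive applications terminate. The restriction $|B| \le 3$ is essential because it bounds how the transposition $(a, h_u)$ can interfere with $M(B)$; together with the strict decrease of the number of non-loop edges between $v$ and $C$ at each non-tautological step, this should close the induction.
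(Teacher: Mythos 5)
Your approach is genuinely different from the paper's, but it has a gap that you yourself flag and do not close, and it is a real one. In the generic situation the edge $a$ and the loop $h_u$ both lie outside $B$, so the multigraph isomorphism $G\cong G^{a,h_u}$ acts on permutations by a transposition of two edge-indices not in $M(B)$: it reverses $\mathrm{sgn}(\sigma)$ while fixing $\mathrm{sgn}(\sigma_{M(B)})$, giving $S(G^{a,h_u},B^{a,h_u})=-S(G,B)$, i.e.\ $\varepsilon=-1$. Plugged into Swan's identity (which carries its own minus sign on the $d$-sum), the unknown $S(G,B)$ appears with coefficient $+1$ on the right and cancels, so once the other terms vanish the equation degenerates to $0=0$. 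Your proposed escape --- extract ``a second, independent equation'' from a path to a different target in $C$ and ``solve the resulting linear system'' --- is not carried out, and there is no reason offered that the second equation is not degenerate in exactly the same way; every reduction that lands back on a graph isomorphic to $G$ faces the same sign. Likewise, the $c=h_v$ branch is only asserted to reduce ``either to $0$ or to a graph on $n-1$ vertices covered by $(IH_n)$''; Corollary \ref{cor:path} never decreases the vertex count by itself, so this step is also unsubstantiated. As written, the argument does not determine $S(G,B)$.

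The paper avoids Swan's lemma here entirely. Under $(D)$ the cut vertex $v$ has corrected degree $4$, so only one (or, in the last case, two) edges cross the cut in each direction; a degree count then shows that \emph{every} Eulerian trail of $G$ traverses the separated component as a single contiguous closed subtrail based at $v$. Grouping Eulerian trails of $G$ by the placement of the edges outside that component, each group contributes $\pm S(G_1,B\cap A(G_1))$ where $G_1$ is the component together with $v$ (roots both at $v$); hence $S(G_1,B\cap A(G_1))$ divides $S(G,B)$, and it vanishes by $(IH_n)$ because $G_1$ has fewer vertices while retaining enough edges. This factorization is the idea your proposal is missing: it converts the separating vertex directly into a strictly smaller instance of the theorem, with no self-referential term to cancel.
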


\begin{figure}[!h]
	\centering
	\begin{subfigure}[b]{0.19\textwidth}
		\centering
		\includegraphics{ch4_figures-31.mps}
		\caption{\label{fig:comp_a}}
	\end{subfigure}
	\begin{subfigure}[b]{0.19\textwidth}
		\centering
		\includegraphics{ch4_figures-32.mps}
		\caption{\label{fig:comp_b}}
	\end{subfigure}
	\begin{subfigure}[b]{0.19\textwidth}
		\centering
		\includegraphics{ch4_figures-33.mps}
		\caption{\label{fig:comp_c}}
	\end{subfigure}
		\begin{subfigure}[b]{0.19\textwidth}
		\centering
		\includegraphics{ch4_figures-34.mps}
		\caption{\label{fig:comp_d}}
	\end{subfigure}
	\begin{subfigure}[b]{0.19\textwidth}
		\centering
		\includegraphics{ch4_figures-35.mps}
		\caption{\label{fig:comp_e}}
	\end{subfigure}
	\caption{Cases of Lemma \ref{lem:conn}}
\end{figure}

\begin{proof}
%reference to figure A
First assume that $v=s=t$ and there is no loop on $v$, see Fig. \ref{fig:comp_a}. By $(D)$, there are three outgoing and three incoming edges attached to $v$. As $G\backslash \{v\}$ is not weakly connected (but $G$ was assumed to be weakly connected in the beginning of the section, otherwise $S(G,B)=0$ follows automatically), there is a weakly connected component such that there is only one edge from $v$ to the component and one in the reverse direction. Denote by $G_1$ the subgraph induced by this component together with $v$.

We claim that $S(G_1,B\cap A(G_1))$ divides $S(G,B)$, where both roots of $G_1$ are defined as $v$. Indeed, first note that each Eulerian trail of $G$ contains a subtrail that is an Eulerian trail of $G_1$, by $\mathrm{deg}^{\pm}_{G_1}(v)=1$. Classify the Eulerian trails of $G$ based on the position of the edges in $A(G)\backslash A(G_1)$. In other words, consider the classes of the equivalence relation defined as follows: two Eulerian trails of $G$ are equivalent if and only if they only differ by a permutation of $A(G_1)$. For any fixed class there is a bijection between the Eulerian trails of $G_1$ and the elements of the class. This bijection is either sign-preserving or sign-reserving on the whole class (where sign means $\sigma\mapsto \mathrm{sgn}(\sigma)\mathrm{sgn}(\sigma_M)$ as before), hence $S(G_1,B\cap A(G_1))$ divides the summand of $S(G,B)$ corresponding to the class. As this holds for each class, the claimed divisibility holds as well.

Therefore, for the case of $v=s=t$ and no loop on $v$, it is enough to prove that  $S(G_1,B\cap A(G_1))=0$. By $(D)$, $\mathrm{cdeg}(u)=4$ for any $u\in V(G_1)\backslash \{v\}$, and $\mathrm{deg}^{\pm}_{G_1}(v)=1$ by definition. So we may apply $(IH_n)$ on $G_1$ as it is isomorphic to the extended digraph $H^*$ of a digraph $H$ satisfying the assumptions of the proposition, but $|A(H)|<|A(G)|$.

%reference to figure B
If $v=s=t$ but there is a loop on $v$ (see Fig. \ref{fig:comp_b}), then the same argument applies (if we define $G_1$ without the loop on $v$), or alternatively, we may apply Lemma \ref{lem:cdeg}/4. 

%reference to figure C and D
Assume that $v=s\neq t$ (the case of $v=t\neq s$ is analogous). By $(D)$, the edges attached to $v$ in $G$ consist of a loop, three outgoing and two incoming edges. 
There are two cases: if there is a component that does not contain $t$ and is connected with one incoming and one outgoing edge to $v$ (see Fig. \ref{fig:comp_c}) then we may apply the previous argument. In the other case there is only one outgoing edge $a$ from $v$ to the component containing $t$ (and none backwards, see Fig. \ref{fig:comp_d}). Then we may apply the same divisibility argument on the component of $t$ as before, where the source is chosen to be $a^+$. The assumptions of the induction hypothesis hold for the component of $t$ by condition $(D)$, hence this case is solved by $(IH_n)$.

Now assume that $v\notin \{s,t\}$. Then by $(D)$, there are three outgoing and three incoming edges attached to $v$ beyond the loop. To apply the previous argument in this case, we have to use the assumption that one of the components contains none of $s$ and $t$. If there is only one edge from this component to $v$ and one backwards, then we are in the same situation as in the very first case, hence we are done by the induction hypothesis $(IH_n)$.

%reference to figure E
The last case is where $G\backslash \{v\}$ has two components, one with $s$ and $t$ that is connected to $v$ by one incoming and one outgoing edge, and the other component with twice that many connecting edges, see Fig. \ref{fig:comp_e}. In that case denote by $G_2$ the subgraph induced by $v$ and the component that does not contain $s$ and $t$ (including the loop on $v$). 
Again, note that each Eulerian trail of $G$ contains a subtrail that is an Eulerian trail of $G_2$ (including the loop on $v$) from $v$ to $v$, by degree considerations. Set the source and target of $G_2$ as $v$, then we may derive that $S(G_2,B\cap A(G_2))$ divides $S(G,B)$ the same way as in the first case. On the other hand, $\mathrm{cdeg}_{G_2}(u)=4$ for all $u\in V(G_2)$ by $(D)$ and $|V(G_2)|<|V(G)|$, so we may apply $(IH_n)$ on $G_2$.

We covered every case allowed by $(D)$ and the assumptions of the lemma, hence the claim follows.
\end{proof}
\vspace{0cm}
%SUBSECTION
\subsection{Proof of the theorem}

Although the idea of the proof of Theorem \ref{thm:upperbound} is based on multiple reduction steps to graphs with simpler structure (in a sense discussed below), for a logically straightforward argument we will move from the special case to the more general one.

The proof applies induction on the minimal "lazy distance" $p$ among the elements of $B$ (see below).

\begin{prop}\label{prop:case-D}
Let $n\geq 3$ and assume that $(IH_n)$ and $(D)$ hold for $G$. Then $S(G,B)=0$.
\end{prop}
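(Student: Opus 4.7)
The plan is to prove the proposition by induction on a ``lazy distance'' $p\in\mathbb{Z}_{\ge 0}\cup\{\infty\}$ measuring how close the elements of $B$ are to $\{s,t\}$; informally, $p$ is the minimum, over $b\in B$, of the length of a shortest undirected non-loop path in $G$ from $\{s,t\}$ to an endpoint of $b$, with $p=\infty$ if $B=\emptyset$. The case $B=\emptyset$ is handled first: since $|A|=4n-1\ge 2n$, Remark \ref{rem:reform}(1,3) and the Amitsur--Levitzki theorem give $S(G,\emptyset)=0$.

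For the base case $p=0$, some $b_0\in B$ has an endpoint in $\{s,t\}$; after possibly replacing $G$ by $G^{\mathrm{op}}$ (which preserves both $S$ and condition $(D)$), assume $b_0^-\in\{s,t\}$. Apply Lemma \ref{lem:loops} to $b_0$:
\[
S(G,B)=\sum_{c^+=b_0^-}S(G_{b_0,c},B_{b_0,c})\;-\;\sum_{d^-=b_0^+}S(G^{b_0,d},B^{b_0,d}).
\]
In each $G_{b_0,c}$, the vertex $b_0^-$ loses one incoming and one outgoing edge, so $\mathrm{cdeg}(b_0^-)=3$; since $b_0^-\in\{s,t\}$, Lemma \ref{lem:cdeg}(2) forces the summand to be zero. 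In each $G^{b_0,d}$ a new loop $\overline{b_0}$ appears on $b_0^+$; together with the loop on $b_0^+$ guaranteed by $(D)$ (or, in the corner case $b_0^+=s=t$, by itself) this triggers Lemma \ref{lem:cdeg}(4) or (5), again making the summand vanish. Any connectivity issues introduced by the construction are settled by Lemma \ref{lem:conn}.

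For the inductive step $p>0$, pick $b_0\in B$ with $d(b_0)=p$ and fix a shortest undirected non-loop path $f_1,\dots,f_p$ from $\{s,t\}$ to an endpoint $v_0$ of $b_0$, passing to $G^{\mathrm{op}}$ if needed so that $f_p^+=v_0$. By minimality of $p$ every interior vertex of the path is outside $\{s,t\}$ and therefore carries a loop under $(D)$, so Corollary \ref{cor:path} yields
\[
S(G,B)=-\,S(G^{f_p,h},B^{f_p,h}),
\]
where $h$ is the loop on $v_0$. The graph $G^{f_p,h}$ still satisfies $(D)$: the loop on $v_0$ is merely relabelled from $h$ to $\overline{f_p}$, and all other vertices are unchanged. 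Provided the lazy distance strictly drops under this reduction, the induction hypothesis closes the argument.

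The main obstacle is engineering $p$ so that the inductive step genuinely decreases it. A naïve undirected distance does not work: in $G^{f_p,h}$ the new non-loop edge $h^{f_p}$ from $f_p^-$ to $v_0$ just replaces $f_p$ on the shortest path, so the plain distance from $\{s,t\}$ to $b_0$ remains $p$. The ``lazy'' refinement must carry additional information, for instance a lexicographic pair of (distance, number of available loops at the terminal vertex of a shortest path) or a ``loop budget'' that is consumed by each application of Corollary \ref{cor:path}. A secondary difficulty is handling the degenerate cases where $b_0$ coincides with $f_p$ or $h$, or where $b_0$ itself is a loop — these must be isolated and dispatched by appealing again to Lemmas \ref{lem:cdeg} and \ref{lem:conn} on the appropriate substructures.
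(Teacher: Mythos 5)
Your proposal has the right general shape --- an induction on a loop-aware distance, driven by Corollary \ref{cor:path} and the reductions of Lemma \ref{lem:loops} --- but both load-bearing steps are broken, and you concede as much for one of them. First, the induction measure. The paper's lazy distance is not measured from $\{s,t\}$ to $B$ but \emph{between two distinct elements} $b_1,b_2\in B$: it is the least $p$ such that some undirected walk $e_0=b_1,e_1,\dots,e_p,e_{p+1}=b_2$ has either every odd- or every even-indexed edge a loop. The reduction of Corollary \ref{cor:path} is then applied with $f_j=e_0=b_1$ itself (or with the roles of $e_0,e_1$ swapped when $b_1$ is a loop), so that in $G^{e_0,e_1}$ the element of $B$ replacing $b_1$ becomes the loop $\overline{e_0}$ sitting one step further along the walk, and the walk $\overline{e_0},e_2,\dots,e_{p+1}$ witnesses lazy distance at most $p-1$. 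Your measure (distance from $\{s,t\}$ to $B$) does not decrease under your reduction, because you swap the last path edge $f_p$ with a loop, neither of which need lie in $B$, so $B$ is merely relabelled; you note this yourself and defer the fix to an unspecified ``refinement''. That refinement is precisely the missing idea, and it also explains the paper's $p=\infty$ case (the alternating-loop walk between $b_1$ and $b_2$ may fail to exist, which under $(D)$ forces a cut vertex at $s=t$ and hands the case to Lemma \ref{lem:conn}).

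Second, your base case $p=0$ fails. After applying Lemma \ref{lem:loops} to $b_0$, you claim every $S(G^{b_0,d},B^{b_0,d})$ vanishes because the new loop $\overline{b_0}$ together with the loop on $b_0^+$ guaranteed by $(D)$ triggers Lemma \ref{lem:cdeg}/5. But when $d$ is that very loop $h$, the construction of $G^{b_0,h}$ consumes $h$ (replacing it by the non-loop edge $h^{b_0}$ from $b_0^-$ to $b_0^+$), so only one loop remains on $b_0^+$, $\mathrm{cdeg}(b_0^+)=4$ still, and no clause of Lemma \ref{lem:cdeg} applies; indeed $G^{b_0,h}$ is isomorphic to $G$, so this summand cannot be killed by any degree argument. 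The paper handles exactly this surviving term with a sign cancellation: Corollary \ref{cor:path} gives $S(G,B)=-S(G^{b_1,b_2},B^{b_1,b_2})$, while the digraph isomorphism $b_1\mapsto (b_2)^{b_1}$, $b_2\mapsto\overline{b_1}$ reverses both $\mathrm{sgn}(\sigma)$ and $\mathrm{sgn}(\sigma_{M(B)})$ and hence gives $S(G,B)=+S(G^{b_1,b_2},B^{b_1,b_2})$, forcing $S(G,B)=0$. Without this cancellation in the base case and without a genuinely decreasing measure in the inductive step, the induction does not close.
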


\begin{proof}
We may assume that $|B|>1$, otherwise we may apply the Amitsur--Levitzki theorem.
Define the lazy distance $p$ as the least natural number such that there exists an undirected walk $e_0, e_1, \dots, e_p, e_{p+1}$ in $G$ where $e_0 = b_1$, $e_{p+1} = b_2$ for some distinct elements of $b_1,b_2 \in B$ and either every odd or every even numbered edge is a loop. If there is no such undirected walk, we define $p$ as infinity.

It may happen that $p$ is not finite, i.e. such a walk does not exist. This may arise only if every walk connecting $b_1$ and $b_2$ touches a vertex without a loop (and that vertex is not the first or last node of the walk). By condition $(D)$, this implies $s=t$ and the loopless vertex is $s$. Moreover, then $G\backslash \{s\}$ is not weakly connected, hence $S(G,B)=0$ by Lemma \ref{lem:conn}. Therefore we may assume that $p$ is finite.

To prove the case of $p=0$ it is enough to consider the case when $b_2$ is a loop on a vertex $v_0$ and either $b_1^-=v_0$ or $b_1^+=v_0$ (if needed, with reversing the roles of $b_1$ and $b_2$). Since replacing $G$ with its opposite $G^{\mathrm{op}}$ does not affect neither our assumptions, nor the required claim (i.e. the value of $p$, condition $(D)$ and $S(G,B)=0$), it is enough to consider the case of $b_1^+=v_0$. By Lemma \ref{lem:cdeg}/5, we may assume that $b_2$ is the only loop on $v_0$. 

We claim that there is an undirected path $f_1, \dots, f_j$ as in Cor. \ref{cor:path} with $f_j = b_1$ and $h = b_2$. Indeed, the only problem can be that any undirected path connecting $b_1^-$ with $s$ or $t$ touches $v_0$. In that case either $v_0=s=t$ (hence $S(G,B)=0$ by Lemma \ref{lem:cdeg}/4) or $G\backslash \{v_0 \}$ is not weakly connected, and the component of $b_1^-$ does not contain $s$ and $t$ (hence $S(G,B)=0$ by Lemma \ref{lem:conn}). Therefore we may take a path as in Cor. \ref{cor:path}, every other assumption of the corollary (including $\mathrm{cdeg}(v_0)=4$ and that there is a loop on $v_0$) follows from $(D)$. Therefore
\begin{equation}\label{eq:graph-iso}
S(G,B) = -S(G^{b_1,b_2},B^{b_1,b_2}).
\end{equation}
On the other hand, there is an isomorphism of doubly-rooted digraphs between $G$ and $G^{b_1,b_2}$ via $b_1 \mapsto (b_2)^{b_1}$ and $b_2 \mapsto \overline{b_1}$ but leaving all other edges unchanged. 
This map gives a bijection on the Eulerian trails of the two graphs, that reverses both the sign of $\sigma$ and of $\sigma_{M(B)}$ in each case. Therefore we also get 
\begin{equation*}\label{eq:graph-iso-two}
S(G,B) = S(G^{b_1,b_2},B^{b_1,b_2}),
\end{equation*}
hence both sides are zero by Eq. \ref{eq:graph-iso}. This completes the case when $p=0$.

Now assume that $p\geq 1$ and take the shortest walk $e_0, e_1, \dots, e_p, e_{p+1}$ as in the definition of $p$. As the first subcase, assume that $e_0 = b_1$ is not a loop. Similarly to the case of $p=0$, we may assume that $e_1$ is the loop on $e_0^+$ (and not $e_0^-$) by replacing $G$ by its opposite, if needed. 
Then we may find an undirected path as in Cor. \ref{cor:path} such that $f_j = e_0 = b_1$ and $h = e_1$. Indeed, such a path exists by Lemma \ref{lem:conn} and Lemma \ref{lem:cdeg}/4 by the same argument we used in the previous paragraph. Therefore $S(G,B) = -S(G^{e_0,e_1},B^{e_0,e_1})$ by the corollary. 
Although the map $e_0 \mapsto (e_1)^{e_0}$ and $e_1 \mapsto \overline{e_0}$ is still a digraph-isomorphism between $G^{e_0,e_1}$ and $G$, but it does not map $B$ into $B^{e_0,e_1}$. In fact, the lazy distance between the elements of $B^{e_0,e_1}$ is at most $p-1$, using the walk $\overline{e_0}, e_2, \dots, e_{p+1}$. Therefore $S(G,B)=0$ by the induction on $p$, using that condition $(D)$ still holds for $G^{e_0,e_1}\cong G$.

As the other subcase of the inductive step on $p$, assume that $e_0 = b_1$ is a loop on a node $v_0$. Repeat the previous argument with reversing the roles of $e_1$ and $e_0$. By replacing $G$ by its opposite (if needed), we may assume that $e_1^+=e_0^-$. Then we may bound the lazy distance between the elements of $B^{e_0,e_1}$ by $p-1$ using the walk $(e_0)^{e_1}, e_2, \dots, e_{p+1}$. Still, $S(G,B)=0$ by the induction on $p$.
\end{proof}

Finally we prove the general case of the theorem, using multiple induction on $n$ (the number of vertices), $\ell$ (the number of vertices that have no loop attached to them) and $j$ (the number of edges in the undirected path defined in Cor. \ref{cor:path}).

\begin{proof}[Proof of Theorem \ref{thm:upperbound}]
We prove by induction on $n$. By \cite[Prop. 8]{F} the standard identity of degree 
$2\left(\left\lfloor\frac{n^2+1}{2}\right\rfloor+\left\lfloor\frac{m}{2}\right\rfloor\right)$ 
(in particular, for $n=2$ and $m=2,3$ the identity of degree 6) holds in $M_nE^m$. By Remark \ref{rem:reform}/2 this means the statement holds for $n=2$.

Now assume that $n\geq 3$ and the induction hypothesis $(IH_n)$ holds. We prove the theorem by induction on the number of vertices $\ell$ that have no loop attached to them. 

First if $\ell = 0$ then we may assume that $\mathrm{cdeg}(v)\geq 4$ for all $v\in V$ by Lemma \ref{lem:cdeg}/1. In fact, then $\mathrm{cdeg}(v) = 4$ for all $v$ by 
\begin{equation}\label{eq:4n}
\sum_{v \in V}\mathrm{cdeg}(v) = |A(G)| + 1 = 4n.
\end{equation}
Moreover, if there is more than one loop on any $v \in V$ then $S(G,B)=0$ by Lemma \ref{lem:cdeg}/5. Therefore we may assume condition $(D)$, hence the statement holds by \mbox{Prop. \ref{prop:case-D}.}

Let $\ell = 1$ and denote by $v_0$ the node that has no loop. By Lemma \ref{lem:cdeg}/1, $\mathrm{cdeg}(v)\geq 4$ for all $v\neq v_0$. Hence, by Eq. \ref{eq:4n} and Lemma \ref{lem:cdeg}/3, we may assume that $\mathrm{cdeg}(v_0) \in \{3,4\}$. If $v_0=s=t$ and $\mathrm{cdeg}(v_0)=3$ then $S(G,B)=0$ by Lemma \ref{lem:cdeg}/2. If $v_0=s=t$ but $\mathrm{cdeg}(v_0)=4$ then condition $(D)$ holds by Eq. \ref{eq:4n} and $\ell=1$, hence $S(G,B)=0$ by Prop. \ref{prop:case-D}. 

Assume that $\mathrm{cdeg}(v_0)=4$ and $v_0\neq s$ (the case of $v_0\neq t$ is analogous). Choose an undirected path $f_1,\dots, f_j$ without repeated vertices, starting at $s$ and ending at $v_0$. We may assume that $f_j^+ = v_0$, by replacing $G$ by its opposite if needed. Then we may apply Cor. \ref{cor:path}. 
It is enough to prove that $S(G^{f_j,d},B^{f_j,d})=0$ for any $d\in A^*$ such that $d^- = f_j^+$. But that is true, since $G^{f_j,d}$ has a loop on all of its vertices, so we may apply the induction on $\ell$.

As other subcase of $\ell = 1$, assume that $\mathrm{cdeg}(v_0) = 3$ and $v\neq s$ (the case of $v_0\neq t$ is analogous). Then there is a vertex $v_1$ such that $\mathrm{cdeg}(v_1) = 5$, and  $\mathrm{cdeg}(v) = 4$ for every $v\in V\backslash \{v_0,v_1\}$. Let $f_1, \dots, f_j$ be a shortest undirected path from $v_1$ to $v_0$ (without repeated vertices). By replacing $G$ by its opposite (if needed), we may assume that $f_j^+ = v_0$.

We will prove by induction on $j$.
Apply Lemma \ref{lem:loops} on $f_j$. 
If $d \in A^*$ such that $d^- = (f_j)^+$ then $G^{f_j,d}$ has a loop on all of its vertices, hence we may apply the $\ell = 0$ case.
Similarly, if $c \in A^*$ such that $c^+ = (f_j)^-$ and $c$ is not a loop then $G_{f_j,c}$ has a loop on all of its vertices hence we may apply the $\ell = 0$ case again. If $c$ is a loop and $j=1$, then now $\mathrm{cdeg}(v)=4$ for all $v\in V$, hence we may apply the first subcase of $\ell = 1$. If $c$ is a loop but $j>1$ then $G_{f_j,c}$ has an undirected path of length $j-1$ without repeated vertices, from $v_1$ to the vertex with corrected degree $3$, hence we may apply the induction hypothesis on $j$. Therefore if $\ell = 1$ then $S(G,B)=0$.

Let $\ell \geq 2$ and let $f_1, \dots, f_j$ be a shortest undirected path of positive length such that the starting node $v_1$ and the ending node $v_0$ have no loops, but every other vertex touched by the path has a loop on it. We prove by induction on $j$. By replacing $G$ by its opposite (if needed), we may assume that $f_j^+ = v_0$. 

Apply Lemma \ref{lem:loops} on $f_j$.
If $d\in A^*$ (resp. $c \in A^*$) such that $d^- = (f_j)^+$ (resp. $c^+ = (f_j)^-$ and $c$ is not a loop) then $G^{f_j,d}$ (resp. $G_{f_j,c}$) has one more vertex with a loop, hence we may apply the induction hypothesis on $\ell$. If $c$ is a loop (and hence $j>1$) then $G_{f_j,c}$ has an undirected path of length $j-1$ without repeated vertices, from $v_1$ to another vertex without a loop, hence we may apply the induction hypothesis on $j$. This completes the proof of the proposition.
\end{proof}

\begin{rem}
For $m\geq 4$ the argument cannot be repeated without significant modifications, since we have no bound on the degrees after the degree-homogenization step, in particular the argument of Lemma \ref{lem:conn} on connected components does not survive.
\end{rem}

\medskip{}

%BIBLIOGRAPHY

\newpage

%APPENDIX
\appendix
\section{Proof of Prop. \ref{prop:equivalence}}\label{appendix}

In the proof we need some basic facts about $E^m$ and $M_n E^m$.
It is well known that the $R$-algebra $E^{m}$ has a standard $R$-basis
of size $2^{m}$ consisting of ordered monomials $v_{i_{1}}v_{i_{2}}\dots v_{i_{d}}$
for any $0\leq d\leq m$ and $1\leq i_{1}<i_{2}<\dots<i_{d}\leq m$. Also, note
that $E^{m}$ is a graded-commutative $R$-algebra via the grading 
$\deg(v_{i_{1}}v_{i_{2}}\dots v_{i_{d}})=d$,
i.e. 
\begin{equation}\label{eq:graded-comm}
v_{i_{1}}v_{i_{2}}\dots v_{i_{d}}\cdot v_{j_{1}}v_{j_{2}}\dots v_{j_{e}}=(-1)^{de}v_{j_{1}}v_{j_{2}}\dots v_{j_{e}}\cdot v_{i_{1}}v_{i_{2}}\dots v_{i_{d}}
\end{equation}
for any two basis elements. In particular, elements of even degree
are central.

Consequently, $M_{n}E^{m}$ is also a graded $R$-algebra with the
$R$-basis $\mathcal{B}_{n,m}$ of size $2^{m}n^{2}$ consisting
of the matrix units $E_{\alpha,\beta}\in M_{n}R$ $(1\leq\alpha,\beta\leq n)$
which are of degree zero, multiplied in all possible ways with the basis elements
$v_{i_{1}}v_{i_{2}}\dots v_{i_{d}}\in E^m \subseteq M_n E^m$, which are of degree $d$.

\begin{proof}[Proof of Prop. \ref{prop:equivalence}]
First $s_{k}$ is multilinear in all variables, hence it is a polynomial
identity on $M_{n}E^{m}$ if and only if it is a polynomial identity for any choice of $x_{1},\dots,x_{k}\in\mathcal{B}_{n,m}$. Moreover, $s_k$ is skew-symmetric, hence changing the order of $x_1,\dots,x_k$ may not affect whether $s_k$ vanishes. We may also assume without loss of generality that the $x_j$'s are distinct.

We call a subset $X\subseteq M_nE^m$ \emph{simplified} if for some $\ell\leq m$ there is a (unique) injection $c:[\ell]\to X$ such that for each $x\in X$
\begin{equation}\label{eq:simplified}
x=
\begin{cases}
v_i E_{\alpha_x,\beta_x}&\textrm{if }c(i)=x\\
E_{\alpha_x,\beta_x}&\textrm{if }x\notin c([\ell])
\end{cases}
\end{equation}
for some $1\leq \alpha_x,\beta_x\leq n$ for each $x\in X$.
We prove that $s_k(x_1,\dots,x_k)=0$ holds for any $x_1,\dots,x_k\in \mathcal{B}_{n,m}$ if and only if it holds for any simplified subset $X$ (enumerated arbitrarily) of size $k$.

Let $x_{1}=v_{i_{1}}v_{i_{2}}\dots v_{i_{d}}E_{\alpha,\beta}$
and $x_{2}=v_{j_{1}}v_{j_{2}}\dots v_{j_{e}}E_{\gamma,\delta}$ be standard
basis elements of $M_n E^m$. If $i_{u}=j_{v}$ for some $u\leq d$, $v\leq e$
then $x_{1}yx_{2}=0=x_{2}yx_{1}$ for any $y\in M_{n}E^{m}$, hence
$s_{k}(x_{1},\dots,x_{k})=0$ is automatic. Therefore we may assume
that the $E^m$-coefficients of the basis elements $x_{1},\dots,x_{k}$
are products of disjoint subsets of the generators $v_{1},\dots,v_{m}$. 

Let $x_{1}$ be as above. If $d$ is even 
then by Eq. \ref{eq:graded-comm}, $v_{i_{1}}v_{i_{2}}\dots v_{i_{d}}$
commutes with $v_{j_{1}}v_{j_{2}}\dots v_{j_{e}}$ for any $j_1,\dots,j_e$, hence 
\[
s_{k}(x_{1},\dots,x_{k})=v_{i_{1}}v_{i_{2}}\dots v_{i_{d}}\cdot s_{k}(E_{\alpha,\beta},\dots,x_{k}).
\]
Similarly, if $d$ is odd then 
\[
s_{k}(x_{1},\dots,x_{k})=v_{i_{2}}\dots v_{i_{d}}\cdot s_{k}(v_{i_{1}}E_{\alpha,\beta},\dots,x_{k}).
\]
Hence every $x \in X$ can be reduced to the form $x = v_{i_x}E_{\alpha_x,\beta_x}$ for some distinct $1\leq i_x\leq m$. After relabeling the generators of $E^m$, we may indeed assume that $X$ is simplified.

\begin{claimno} There is a bijection between simplified subsets $X\subseteq M_n E^m$ of size $k$ with a choice of $s,t\in[n]$ and the set of doubly-rooted digraphs $G$ on $V(G)=[n]$ with $|A(G)|=k$ together with an injection $d:[\ell] \to A(G)$ for some $\ell \leq m$ (and without parallel edges outside of $d([\ell])$) such that for a bijective pair the following holds: for any enumeration $X=\{x_1,\dots,x_k\}$ and $B=d([\ell])$,
\begin{equation}\label{eq:connection}
(s_k(x_1,\dots,x_k))_{s,t}= \pm T(G,B) v_{1}\dots v_{\ell}
\end{equation}
for some choice of sign.
\end{claimno}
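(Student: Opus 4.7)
The plan has three stages: construct the bijection, expand the matrix entry $(s_k)_{s,t}$, and match the resulting coefficient with $\pm T(G,B)$.

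First, I would write down the bijection explicitly: to a simplified subset $X$ with enumeration $\{x_1,\dots,x_k\}$ and injection $c:[\ell]\to X$, associate $G=([n],\{a_1,\dots,a_k\},s,t)$ with $a_j=(\alpha_{x_j},\beta_{x_j})$, and define $d:[\ell]\to A(G)$ by $d(h)=a_j$ when $c(h)=x_j$. This determines $B=d([\ell])$ and $M(B)$; the inverse construction is immediate. Distinctness of the $x_j$'s outside $c([\ell])$ forces the absence of parallel edges outside $B$, since two equal matrix units would coincide as elements of $M_nE^m$.

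Next, I would expand
\[(s_k(x_1,\dots,x_k))_{s,t}=\sum_{\pi\in\mathfrak{S}_k}\mathrm{sgn}(\pi)\,(x_{\pi(1)}\cdots x_{\pi(k)})_{s,t}.\]
The matrix-unit factor of each summand is nonzero (and equal to $E_{s,t}$) iff $(a_{\pi(1)},\dots,a_{\pi(k)})$ concatenates into an Eulerian trail from $s$ to $t$, i.e.\ $\pi\in P(G)$. For such $\pi$, the $E^m$-coefficient is the product, in walk order, of the $v$-factors of the $B$-edges; by the graded commutativity \eqref{eq:graded-comm} this equals $\mathrm{sgn}(\lambda_\pi)\,v_1\cdots v_\ell$, where $\lambda_\pi\in\mathfrak{S}_\ell$ is the permutation that sorts the sequence of walk-positions $(\pi^{-1}(j_1),\dots,\pi^{-1}(j_\ell))$ of the $B$-edges (with $M(B)=\{j_1<\cdots<j_\ell\}$). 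Summing,
\[(s_k)_{s,t}=\Bigl(\sum_{\pi\in P(G)}\mathrm{sgn}(\pi)\mathrm{sgn}(\lambda_\pi)\Bigr)v_1\cdots v_\ell.\]

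The third stage is to show this coefficient has absolute value $T(G,B)$. I plan to prove that $\mathrm{sgn}(\pi_{M(B)})\cdot\mathrm{sgn}(\lambda_\pi)$ is a constant $\eta\in\{\pm1\}$ on $P(G)$, so that the coefficient equals $\eta\cdot S(G,B)$ and therefore has absolute value $T(G,B)$. For the invariance, any two Eulerian trails in $G$ are related by a sequence of parallel-subtrail swaps (split the walk at two visits to a repeated vertex $v$ into closed subtrails $q_1, q_2$ and exchange them). Under such a swap, Obs.~\ref{obs}(2) gives $\mathrm{sgn}(\sigma^{q_1\leftrightarrow q_2}_{M(B)})=(-1)^{|q_1|'(h'+|q_2|')+h'|q_2|'}\mathrm{sgn}(\sigma_{M(B)})$; the same factor governs $\mathrm{sgn}(\lambda_\sigma)$, because the swap permutes the walk-positions of the $B$-edges as a block permutation with block sizes $|q_1|', h', |q_2|'$ and the outer two blocks interchanged, whose inversion count is exactly $|q_1|'(h'+|q_2|')+h'|q_2|'$. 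Hence $\mathrm{sgn}(\sigma_{M(B)})\mathrm{sgn}(\lambda_\sigma)$ is swap-invariant, and thus constant on $P(G)$.

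The main obstacle lies in this last stage: verifying the sign formula for $\lambda_\sigma$ under parallel-subtrail swaps (a direct inversion count mirroring Obs.~\ref{obs}) and justifying that such swaps connect all of $P(G)$. The latter is a standard consequence of the parametrization of Eulerian trails by transition systems at the vertices, with each single change of transition pairing realized by one parallel-subtrail swap.
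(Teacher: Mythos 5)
Your first two stages coincide with the paper's argument: the same edge-by-edge dictionary between simplified subsets and doubly-rooted digraphs with a marked injection, the same observation that distinctness of the loop-free basis elements rules out parallel edges outside $B$, and the same identification of the nonvanishing summands of $(s_k(x_1,\dots,x_k))_{s,t}$ with $P(G)$. Where you genuinely diverge is the final sign-matching stage. The paper fixes the enumeration of $X$ so that the $B$-elements come first (hence $M(B)=[\ell]$ and $x_j=v_jE_{\alpha_j,\beta_j}$ for $j\leq\ell$), reads off the $E^m$-coefficient of the $\sigma$-summand in one line as $v_{\sigma_M(1)}\cdots v_{\sigma_M(\ell)}=\mathrm{sgn}(\sigma_M)\,v_1\cdots v_\ell$, and then handles arbitrary enumerations by citing the enumeration-independence of $T(G,B)$ already proved in Section \ref{sec:reformulation}. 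You instead express the sign of the $v$-product as $\mathrm{sgn}(\lambda_\pi)$, the sign of the sorting permutation of the walk-positions of the $B$-edges (in effect $\mathrm{sgn}\big((\pi^{-1})_{M(B)}\big)$), and then prove that $\mathrm{sgn}(\lambda_\pi)\mathrm{sgn}(\pi_{M(B)})$ is constant on $P(G)$ by showing both signs transform by the same block-swap factor $(-1)^{|q_1|'(h'+|q_2|')+h'|q_2|'}$ under exchanges of parallel subtrails. This is heavier machinery than the paper deploys, but it is not idle: since $\sigma\mapsto\sigma_M$ is not multiplicative, $\mathrm{sgn}(\lambda_\pi)$ and $\mathrm{sgn}(\pi_{M(B)})$ genuinely differ for individual permutations, so some reconciliation is needed; the paper's route buys brevity by pushing the discrepancy into the already-established invariance of $|S(G,B)|$ under re-enumeration, while yours makes the compensating constant on $P(G)$ explicit, which is all the $\pm$ in the claim requires.

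The one step you should shore up is the connectivity of $P(G)$ under parallel-subtrail swaps. The fact is true, but the justification you give via transition systems is not quite right as stated: changing a single transition pairing at a vertex need not yield another Eulerian trail (it can split the trail into a trail together with a disjoint closed trail), so not every elementary change of transition system is realized by a swap. The clean argument is an induction on the first position at which two Eulerian trails from $s$ to $t$ disagree: if both trails sit at a vertex $v$ after $i-1$ edges and continue with different outgoing edges $e$ and $e'$, then in the first trail the segment from position $i$ up to the occurrence of $e'$ is a closed subtrail at $v$, and swapping it with the closed subtrail beginning at $e'$ (which exists by a degree count at $v$) strictly increases the length of agreement. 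With that argument, and a direct inversion count verifying the block-swap factor for $\lambda_\pi$ (which mirrors the count behind Obs.~\ref{obs}), your proof is complete.
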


Note that to have a bijection we have to consider two doubly-rooted digraphs as the same if they only differ by the labeling of their edges.

\begin{proof}[Proof of the claim] 
Given a simplified subset $X$, define the corresponding doubly-rooted digraph $G$ on $V=[n]$ where the set of edges with source $\alpha$ and target $\beta$ is
\[
A(\alpha,\beta):=\{x\in X\,\mid \,\alpha_x=\alpha,\,\beta_x=\beta\}
\]
and $d:=c$, where $c$ is given via Eq. \ref{eq:simplified} in the definition of simplified sets.

Then choose an enumeration $X=\{x_1,\dots,x_k\}$ such that $x_j=c(j)$ for $j\leq \ell$. In other words,
\[x_j=
\begin{cases}
v_j E_{\alpha_{x_j},\beta_{x_j}}&\textrm{if } 1 \leq j\leq \ell \\
E_{\alpha_{x_j},\beta_{x_j}}&\textrm{if }\ell < j \leq k.
\end{cases}
\]
For simplicity, we write $\alpha_j$ instead of $\alpha_{x_j}$ for all $1\leq j\leq k$. Using that the $v_i$'s commute with the matrix units and $M=[\ell]$, we may compute
\[
s_{k}(x_{1},\dots,x_{k})=\sum_{\sigma\in\mathfrak{S}_{k}}
\mathrm{sgn}(\sigma) x_{\sigma(1)}\dots x_{\sigma(k)}=
\]
\[
=\sum_{\sigma\in\mathfrak{S}_{k}}\mathrm{sgn}(\sigma)
(v_{\sigma_M(1)}\dots v_{\sigma_M(\ell)}) E_{\alpha_{\sigma(1)},\beta_{\sigma(1)}}\dots 
E_{\alpha_{\sigma(k)},\beta_{\sigma(k)}}=
\]
\[
=(v_1\dots v_\ell)\sum_{\sigma\in\mathfrak{S}_{k}}
\mathrm{sgn}(\sigma)\mathrm{sgn}(\sigma_M)
E_{\alpha_{\sigma(1)},\beta_{\sigma(1)}}\dots 
E_{\alpha_{\sigma(k)},\beta_{\sigma(k)}}
\]
where 
\[
E_{\alpha_{\sigma(1)},\beta_{\sigma(1)}}\dots 
E_{\alpha_{\sigma(k)},\beta_{\sigma(k)}} = 
E_{s,t}
\]
if $\alpha_{\sigma(1)}=s$, $\beta_{\sigma(k)}=t$ and $\alpha_{\sigma(j+1)}=\beta_{\sigma(j)}$ for all $j=1,\dots,k-1$, i.e. when $(x_{\sigma(1)},\dots,x_{\sigma(k)})$ represents an Eulerian trail in $G$ from $s$ to $t$, otherwise the product is $0$. The sign in Eq. \ref{eq:connection} may appear for a different enumeration of $X$.

Conversely, given $G$ and $d:[\ell]\to A$ of the above form, for each $a\in A$ we may define $x_a$ as in Eq. \ref{eq:simplified} with $\alpha_{x_a}=a^-$, $\beta_{x_a}=a^+$ and $c(i)=x_{d(i)}$ for all $i\in [\ell]$. These two constructions are inverses of each other and hence the claim follows.
\end{proof}

The proposition follows directly from the claim, using that $T(G,B)$ is independent of the enumeration $d:[\ell]\to B$.
\end{proof}

\end{document}